\newcommand{\blue}{}
\definecolor{mygreen}{rgb}{0.05,0.5,0.8}
 \newtheorem{thm}{Theorem}[section]
 \newtheorem{lem}[thm]{Lemma}
 \newtheorem{prop}[thm]{Proposition}
 \theoremstyle{definition}
 \theoremstyle{remark}
 \newtheorem{rem}{Remark}
 \numberwithin{equation}{section}
\newcommand{\cl}{\scriptscriptstyle{\text{CL}}}
\newcommand{\nlg}{\scriptscriptstyle{\text{LG}}}
\newcommand{\nsg}{\scriptscriptstyle{\text{SG}}}
\newcommand{\nsl}{\scriptscriptstyle{\text{SL}}}
\DeclareMathOperator{\Bo}{Bo}
\DeclareMathOperator{\argmin}{argmin}
\DeclareMathOperator{\proj}{Proj}
\DeclareMathOperator{\Ray}{Ray}
\newcommand{\ssr}{\xi}
\newcommand{\la}{\langle}
\newcommand{\ra}{\rangle}
\newcommand{\pt}{\partial}
\newcommand{\pcon}{\eta}
\newcommand{\eps}{\varepsilon}
\newcommand{\ud}{\,\mathrm{d}}
\newcommand{\8}{\infty}
\newcommand{\F}{\mathcal{F}}
\newcommand{\mm}{\mathcal{M}}
\providecommand{\bbs}[1]{\left(#1\right)}
\newcommand{\llra}{\Longleftrightarrow}
\begin{document}

\title[Projection method for contact line dynamics]{Projection method for  droplet dynamics on groove-textured surface with merging and splitting}

\author[Y. Gao]{Yuan Gao}
\address{Department of Mathematics, Duke University, Durham, NC}
\email{yuangao@math.duke.edu}

\author[J.-G. Liu]{Jian-Guo Liu}
\address{Department of Mathematics and Department of
  Physics, Duke University, Durham, NC}
\email{jliu@math.duke.edu}

\begin{abstract}
The geometric motion of small droplets placed on an impermeable textured substrate is mainly driven by the capillary effect, the competition among surface tensions of three phases at the moving contact lines, and the impermeable  substrate obstacle.   After introducing an infinite dimensional manifold with an admissible tangent space on the boundary of the manifold, by Onsager's principle for an obstacle problem, we derive the associated parabolic variational inequalities. These variational inequalities can be used to simulate the  contact line dynamics with unavoidable merging and splitting of droplets due to the impermeable obstacle.
 To efficiently solve the parabolic variational inequality,  
we propose an unconditional stable explicit boundary updating scheme coupled with a projection method. The explicit boundary updating efficiently decouples the computation of the motion by mean curvature of the capillary surface and the moving contact lines. Meanwhile, the projection step efficiently splits the difficulties brought by the obstacle and  the motion by mean curvature of the capillary surface. Furthermore, we prove the unconditional stability of the scheme and present an accuracy check. The convergence of the proposed scheme is also proved using a nonlinear Trotter-Kato's product formula under the pinning contact line assumption.
After incorporating
 the phase transition information at  splitting points, several challenging examples including splitting and merging of droplets are demonstrated.
\end{abstract}

\date{\today}

\maketitle
\section{Introduction}

The contact line dynamics for droplets  placed on an impermeable groove-textured substrate is a historical and challenging problem. 
The capillary effect caused by the interfacial energy dominates the dynamics of small droplets, particularly the contact lines (where three phases meet).    The dynamics of a liquid drop with capillary effect is essentially a mean curvature flow for the capillary surface (the interface between fluids inside the droplet and  gas surrounding the droplet) associated  with free boundaries (moving contact lines) on the  impermeable groove-textured substrate. The mechanics for the  contact lines dynamics is determined by Onsager's linear response relation between contact line speed $v_{\cl}$ and the unbalanced Young force $F_Y$; see \eqref{unbalance}.  The dynamic contact angle then tends to relax to the equilibrium contact angle (i.e., Young's angle $\theta_Y$) which is determined by the competitions among the surface tensions  of  three interfaces at the contact lines; see \eqref{young}.

 The dynamics become more complicated after considering  a nonlocal volume constraint, a   groove-textured substrate with constantly changed slope, a gravitational effect, and  unavoidable topological changes such as splitting and merging due to the impermeable groove-textured substrate. 
In this paper, we focus on  variational derivations,  numerical methods and    stability/convergence analysis for this challenging obstacle problem coupled with the  droplet dynamics, which consist of (i) an obstacle-type weak formulation of the motion by mean curvature for the capillary surface and (ii)  the moving contact line boundary conditions.

On the one hand, for a pure mean curvature flow model  with an obstacle but without contact line dynamics, we refer to \cite{Almeida_Chambolle_Novaga_2012, mercier2015mean} for local existence and uniqueness of a regular solution by constructing  a minimizing movement sequence. 
On the other hand, for the moving contact line boundary condition coupled with the quasi-static dynamics for the capillary surface,  there are some analysis results on  global existence and homogenization problems; see \cite{caffarelli2006homogenization, Grunewald_Kim_2011, Kim_Mellet_2014, feldman_dynamic_2014} for the	 capillary surface described by a harmonic equation and see \cite{Caffarelli_Mellet_2007, Caffarelli_Mellet_2007a, Chen_Wang_Xu_2013, Feldman_Kim_2018} for the capillary surface described by a spatial-constant mean curvature equation. 

However, the contact line dynamics for  droplets  with an unavoidable topological change including splitting and merging due to an impermeable   obstacle are lack of study in terms of both variational derivations, mathematical analysis and efficient algorithms.

First, we will  regard the full dynamics of a droplet with moving contact lines and an impermeable obstacle substrate as a curve on an infinite dimensional manifold $\mm$ with a boundary; see Section \ref{sec_configuration}. Then after introducing a free energy $\F$, an admissible tangent space $T\mm$ and a Rayleigh dissipation functional $Q$, by using Onsager's principle for an obstacle problem, we derive two parabolic variational   inequalities (PVI); see \eqref{PVI_tang} and \eqref{PVI_new}. PVI \eqref{PVI_new} is derived by minimizing a Rayleighian in a subset of the admissible tangent space, however it is computationally friendly and  we will prove that these two versions of PVIs are equivalent and have the same strong formulation; see Proposition \ref{prop_PVIs}.

Second, after including a textured substrate, for PVI \eqref{PVI_class}, we propose a numerical scheme based on an unconditionally stable explicit updating for moving contact lines and a projection method for the obstacle problem; see Section \ref{sec_scheme}. The explicit boundary updating efficiently decouples the computations for the moving contact line and the motion of the capillary surface. Meanwhile, the projection operator for the mean curvature flow of droplets with volume constraint has a closed formula characterization (see \eqref{tmtmtm} and Lemma \ref{lem_proj}), so the proposed numerical scheme based on  the projection method (splitting method) is very efficient and easy to implement.     The unconditional stability of the projection method is proved in Proposition \ref{prop_stability}, which focuses on the difficulties brought by the moving contact line, and the motion by mean curvature with an obstacle.  We also unveil the pure  PVI formulation misses the phase transition information at  splitting (merging) points where the interface between two phases becomes an emerged triple junction of three phases, so pure PVI  is not enough to correctly show the physical phenomena.  Therefore, we enforce the phase transition and contact line mechanism at emerged triple junctions in the last step of the projection method  to incorporate both the obstacle information and the phase transition information at splitting points; see Section \ref{sec_scheme}.

Next, under the pinning contact line assumption,  the full dynamics of droplets with an obstacle  and volume constraint can be formulated as a gradient flow of the sum of two convex functional in a Hilbert space. More precisely, let $K$ be a closed convex subset in a Hilbert Space $X_1$; see specific definition  in Section \ref{sec_semi} after including the volume constraint. Thus using an indicator functional  $I_K(u):=\left\{\begin{array}{cc}
0, \quad & u\in K;\\
+\8, \quad & u\notin K,
\end{array} \right.
$,   to seek solution $u(t)\in K$ of PVI \eqref{PVI_class} is equivalent to  seek the unique mild solution \cite{brezis1973operateurs} in $X_1$ generated by the sum of two maximal monotone operators $\nabla \F+\pt I_K$,
i.e.,
\begin{equation}\label{maineq}
\pt_t u(t) \in - (\nabla \F+\pt I_K)(u(t)), \quad \text{ for a.e. } t> 0,\, u(0)=u_0.
\end{equation}
Notice here we already take the advantage of the efficient local information from G\^ateaux derivatives $\nabla \F$ instead of using the subdifferential $\pt \F$.
In Lemma \ref{lem_proj}, for a given reference capillary profile $u^*$ satisfying impermeable obstacle condition $u^*\geq 0$ and the volume constraint $\int u^* \equiv V$, we give a resolvent characterization for the projection operator $\proj_K(\tilde{u}^{k+1}-u^*)$ in $X_1$
 \begin{equation}\label{tmtmtm}
\left\{
\begin{array}{cc}
u^{k+1} = \max\{ \tilde{u}^{k+1} + \lambda ,\,  0\},\\
\int_{D} u^{k+1} \ud x \ud y = V,
\end{array}
\right. \llra u^{k+1} =u^*+ (I+\tau \pt I_K)^{-1} (\tilde{u}^{k+1}-u^*).
\end{equation} 
Hence in Theorem \ref{thm1}, based on the alternate resolvent reformulation of our projection method
$$u^{k+1}= (I+\tau \pt I_K)^{-1} (I+\tau \nabla \F)^{-1} u^k = \proj_K  (I+\tau \nabla \F)^{-1} u^k,$$  
we apply a nonlinear version of Trotter-Kato's product formula \cite{Kato_Masuda_1978} to finally prove the convergence of the projection method in $X_1$.
The spirit of the projection method is same as the one for  other efficient splitting methods when solving some important physical problems such as   incompressive Navier-Stokes equation \cite{chorin1969convergence, temam1970quelques} and the Landau-Lifshitz equation \cite{weinan2001numerical}.

Finally,  in Section \ref{sec_simu}, we use a projected triple Gaussian as an initial droplet profile to check the order of convergence of the proposed projection method. Both the moving contact line, the capillary surface and the dynamic contact angle have a perfect first order accuracy. Then   several numerical simulations are conducted including the splitting of one droplet on an inclined groove-textured substrate and the merging of two droplets in a Utah teapot.

There are also many other numerical methods for simulating the geometric motion of droplets with moving contact lines or for general geometric equations with an obstacle, c.f. \cite{Leung_Zhao_2009, Morland_1982, Zhao_Chan_Merriman_Osher_1996,  Esedoglu_Tsai_Ruuth_2008, Lu_Otto_2015, xu2016reinitialization,  xu2017efficient, Wang_Wang_Xu_2019, barrett2020parametric} and the references therein. 
Particularly,  we compare with those closely related to the  geometric motion of droplets with moving contact lines and an impermeable obstacle.   The mean curvature flow with obstacles is theoretically studied in \cite{Almeida_Chambolle_Novaga_2012} in terms of a weak solution constructed by a minimizing movement (implicit time-discretization). The $L^2$ penalty method for the obstacle problem  is introduced in \cite{glowinski, Scholz_1984} and recently an advanced $L^1$ penalty method is introduced in \cite{Tran_Schaeffer_Feldman_Osher_2015, Zosso_Osting_Xia_Osher_2017}. They replace the indicator function $I_K(u)$ in the total energy by a $L^2$ (resp.  $L^1$) penalty $\mu\|(g-u)_+\|_{L^2}$ (resp. $\mu\|(w-u)_+\|_{L^1}$ )  with a large enough parameter $\mu$.  The threshold dynamics method based on characteristic functions is first used  in \cite{xu2017efficient,  Wang_Wang_Xu_2019} to simulate the contact line dynamics, which is particularly efficient and can be easily adapted to droplets with topological changes.  The authors  extended the original threshold method for mean curvature flows to  the case with a solid substrate and a free energy with multi-phase surface tensions,  in the form of obstacle problems. However, since they do not enforce the contact line mechanism \cite{deGennes_1985, ren2007boundary}, i.e., relation between the contact line speed and the unbalanced Young force $v_{\cl}=\frac{\gamma_{\nlg}}{\ssr}\left(\cos \theta_Y -\cos \theta_{\cl}\right)$, thus their computations on the moving contact line and the dynamic contact angles  are different with the present paper and only the  equilibrium Young angle $\theta_Y$ is accurately recovered. Instead, the contact line mechanism in  \cite{Jiang_Bao_Thompson_Srolovitz_2012, Wang_Jiang_Bao_Srolovitz_2015} are accurately recovered at each time step. With some  local treatments at  splitting points, the authors simulate the pinch off due to an impermeable substrate of solid drops described by the  surface diffusion with either a sharp interface dynamics or a corresponding phase field model. We point out the explicit front tracking method based on Lagrangian coordinates in \cite{Wang_Jiang_Bao_Srolovitz_2015} is convenient for simulation of the pinch-off. However, the Courant–Friedrichs–Lewy condition constraint for this explicit method is severe. 
Besides, the level set method developed in \cite{li2010augmented, xu2016reinitialization} can not be directly used and also can not deal with textured substrates which lead to PVIs for obstacle problems. For other general geometric equations including the motion by mean curvature for a two phase flow model, we refer to a review article \cite{barrett2020parametric} and references therein for a  parametric finite element  method, which is particularly useful for high dimensional problems.

The remaining part of the  paper will be organized as follows.
 In Section \ref{sec_gf}, we derive the associated parabolic variational inequalities for the contact line dynamics by Onsager's principle for an obstacle problem. In Section \ref{sec_ss}, we propose an unconditionally stable explicit boundary updating scheme coupled with the projection method for solving PVI  and  the phase transition information for merging and splitting. The stability and convergence analysis are given in Section \ref{sec_stability} and Section \ref{sec_conv}. In Section \ref{sec_simu}, we give an accuracy check for the projection method and conduct several  simulations including merging and splitting of droplets on groove-textured substrate.

\section{Parabolic variational inequalities of droplet dynamics derived by Onsager's principle with an obstacle}\label{sec_gf}
In this section, we derive the parabolic variational inequality for the contact line dynamics with an impermeable substrate as an obstacle.    We first introduce the configuration state ( the contact domain and the capillary surface) and the free energy of the  contact line dynamics.  Then we regard the configuration space as an infinite dimensional manifold with a boundary and compute the first variation of the free energy on this manifold; see Section \ref{sec_2.1} and Section \ref{sec_2.2}. Next, after introducing the admissible tangent space and a Rayleigh dissipation functional, we apply Onsager's principle for the obstacle problem to derive two associated parabolic variational inequalities (PVIs), which can be used to describe the droplet dynamics on a rough substrate with unavoidable merging and splitting; see Section \ref{sec_PVI}.       Finally, we show the equivalence  between two versions of PVI in Proposition \ref{prop_PVIs}, one of which will be used to design efficient numerical schemes for the contact line dynamics with an  impermeable obstacle.   
\subsection{Configuration states and the  free energy}\label{sec_2.1}
We study the motion of a three-dimensional droplet placed on an impermeable substrate $\{(x,y,z); z=0\}$. Let the wetting domain (a.k.a. the contact domain) be $(x,y)\in D \subset\mathbb{R}^2$ with boundary $\Gamma:=\pt D$ (physically known as the contact lines). We focus on the case that the capillary surface $S$ (the interface between the liquid and the gas) of the droplet is described by  a graph function $u(x,y)$. The droplet domain is then identified by the area $$A:=\{(x,y, z);~ (x,y)\in D, \,0<z<u(x,y),\, \, u|_{\Gamma}=0\}$$ with sharp interface $S:=\{(x,y, u(x,y)),\quad (x,y)\in D\}$. We will give a kinematic description, and a driven energy in this section.
{\blue  \subsubsection{Configuration state, geometric quantities and kinematic description for velocities  of a droplet}\label{sec_potential}
First, from the wetting domain and the capillary surface defined above,   a configuration state of a droplet  is chosen to be $(\Gamma, u)$ with $u|_\Gamma=0$.  

Second, given a configuration state $(\Gamma, u)$ with $u|_\Gamma=0$, we  clarify  the following
geometric quantities; see Fig \ref{fig_show} (a).
 The unit outer normal on the capillary surface is $n:= \frac{1}{\sqrt{1+|\nabla u|^2}}(-\nabla u, 1).$ The unit outer normal at the contact line $\Gamma$ is $n_{\cl} := \frac{-\nabla u}{|\nabla u|}\Big|_\Gamma$, which in 3D is extended as $(n_{\cl},0)$.
Define the contact angles (inside the droplet $A$) as  $\theta_{\cl}$ satisfying 
\begin{equation}\label{con-ang-n}
\sin \theta_{\cl} = n \cdot (n_{\cl}, 0) = \frac{|\nabla u|}{\sqrt{1+|\nabla u|^2}},
\end{equation}
which implies
$
\tan \theta_{\cl} = | \nabla  u|
$ at $\Gamma$.

Third, given a configuration state $(\Gamma, u)$ with $u|_\Gamma=0$, we describe two velocities of the droplet and their relations. 
(i) The motion of contact line $\Gamma$ with outer normal $n_{\cl}$  is described by the contact line speed $v_{\cl}$.
(ii) The motion of the capillary surface  with the outer normal $n$ is described by the normal speed $v_n = \frac{\pt_t u }{\sqrt{1+|\nabla u|^2}} $. Here $\pt_t u$ is the vertical velocity of the capillary surface , which is convenient to use in the graph representation.
(iii) The continuity equation $u(\Gamma(t), t)\equiv 0$ gives the relation between $\pt_t u|_\Gamma$ and $v_{\cl}$.
\begin{equation}\label{com-angle}
\begin{aligned}
\frac{\ud u(\Gamma(t),t)}{\ud t} =& \pt_t u(\Gamma(t),t) + \nabla  u(\Gamma(t),t)\cdot {\pt_t \Gamma}=\pt_t u(\Gamma(t),t) +  (\nabla  u(\Gamma(t),t)\cdot n_{\cl}) v_{\cl} \\
=&\pt_t u(\Gamma(t),t) -|\nabla  u(\Gamma(t),t)| v_{\cl}=0,
\end{aligned}
\end{equation}
where we used the fact that $n_{\cl} := \frac{-\nabla u}{|\nabla u|}\Big|_\Gamma$.

An important case is to assume the  volume preserving constraint
$
 \int_{D(t)} u(x,y,t) \ud x \ud y=V.
$
In this case, 
 by $u(x,y,t)=0$ on $\Gamma(t)$ and the Reynolds  transport theorem, we have
\begin{equation}\label{reynolds}
\int_{D(t)} \pt_t u \ud x \ud y = \frac{\ud}{\ud t} \int_{D(t)} u(x,y,t) \ud x \ud y=0,
\end{equation}
which gives an additional constraint on the vertical velocity.
}
\begin{figure}
\includegraphics[scale=0.45]{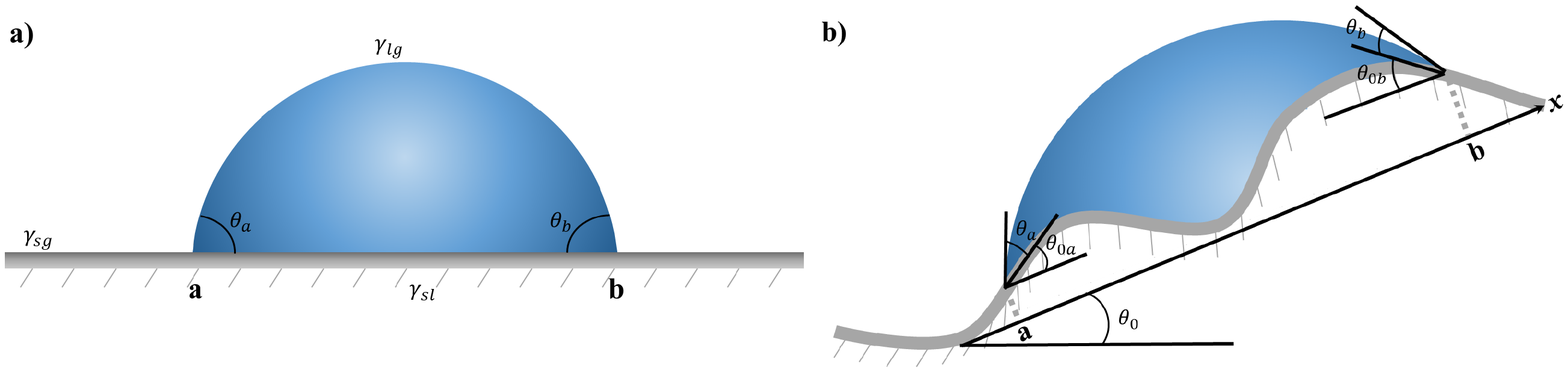} 
\caption{Droplets with contact angles $\theta_a, \theta_b$ and surface tensions $\gamma_{\nlg}, \gamma_{\nsg}, \gamma_{\nsl}$. (a) Droplet placed on $z=0$; (b) Droplet placed on an inclined groove-textured surface with effective angle $\theta_0$.}\label{fig_show}
\end{figure}

\subsubsection{Free energy for the droplet and Young's angle}\label{sec_energy} Now we clarify the  free energy of a droplet following the notations and terminologies in the classical book of De Gennes \cite{de2013capillarity}. For a droplet placed on a substrate, the surface tension contributes the leading effect to the dynamics and the equilibrium of the droplet. Especially, for  contact line $\Gamma$, where three phases of materials (gas, liquid, and solid) meet, one should consider the interactions between their surface energy. Denote  $\gamma_{\nsl}>0$ ($\gamma_{\nsg}, \gamma_{\nlg}$ resp.) as the interfacial surface energy density (a.k.a. surface tension coefficients) between solid-liquid phases (solid-gas, liquid-gas resp.). To measure the total area of the capillary surface with surface tension $\gamma_{\nlg}$ and the area of the contact domain with the relative surface tension $\gamma_{\nsl}-\gamma_{\nsg}$,  
we take the total free energy  of the droplet as the summation of the surface energy and the  gravitational energy 
\begin{equation}\label{energy}
\begin{aligned}
\F=& \gamma_{\nlg} \int_{\pt A(t) \cap \{u>0\}} \ud s + (\gamma_{\nsl}-\gamma_{\nsg}) \int_{D(t)} \ud x\ud y+ \rho g  \int_{D(t)}\frac{u^2}{2} \ud x \ud y,\\
=&    \gamma_{\nlg} \int_{D(t)} \sqrt{1+ |\nabla  u|^2} \ud x \ud y + (\gamma_{\nsl}-\gamma_{\nsg})\int_{D(t)} \ud x\ud y+ \rho g  \int_{D(t)}\frac{u^2}{2} \ud x \ud y ,\\
\end{aligned}
\end{equation}
where $\rho$ is the density of the liquid, $g$ is  the gravitational acceleration. 
Besides gravity, we neglect other forces, such as  inertia effect, viscosity stress inside the droplet, Marangoni effect, electromagnetic fields, evaporation and condensation, etc. 

With  a fixed volume $V$,  competitions between the three surface tensions  will determine uniquely the steady state of the droplet, i.e. the minimizer of $\F$.  
 Define  $\sigma$ as the  relative adhesion coefficient between the liquid and the solid
$$\sigma:= \frac{\gamma_{\nsl}-\gamma_{\nsg}}{\gamma_{\nlg}}.$$
 We remark that the spreading parameter $S:=\gamma_{\nlg}\left( \frac{\gamma_{\nsg}-\gamma_{\nsl}}{\gamma_{\nlg}} -1\right)$ could be positive in the so-called total wetting regime \cite[Section 1.2.1]{deGennes_1985}.  
 For the present contact angle dynamics setup, $S<0$ implies $\sigma>-1$.  By Young's equation \cite{young1805iii}, the equilibrium contact angle 
$\theta_Y$ is determined by the Young's angle condition
\begin{equation}\label{young}
\cos \theta_Y= \frac{\gamma_{\nsg}-\gamma_{\nsl}}{\gamma_{\nlg}} = -\sigma.
\end{equation}
We will only focus on the partially wetting (hydrophilic) case $-1<\sigma<0$, or equivalently $0<\theta_Y< \frac{\pi}{2}.$ In this case, 
adhesive forces between the liquid and the solid tend to spread the droplet across the surface and there is a vertical graph representation of the capillary surface. We refer to \cite{gao2020gradient} for more discussions on   dewetting or non-wetting droplets (i.e. $0<\sigma<1$) with a horizontal graph representation for the  quasi-static case.

\subsection{Configuration space as a manifold and the first variation of the free energy}\label{sec_2.2}
With the specific  driven energy $\F$, we compute the first variation of the free energy for any given virtual displacement. Before that, we first clarify the configuration space as a manifold and define the tangent plane at each point on the manifold.
 \subsubsection{Configuration space for the obstacle problem: a manifold with a boundary}\label{sec_configuration}
{\blue
  Here we first give a derivation by taking rough impermeable substrate as $z=0$ for simplicity.
We use an infinite dimensional manifold \cite{Klingenberg_Wilhelm} to describe the configuration space
\begin{equation}\label{mm-n}
\mm:= \{(\Gamma, u);\, \Gamma \in C^1, \, u\in C^2(D)\cap L^2(D),~u\geq 0 \text{ on }D, ~ u|_\Gamma=0\}.
\end{equation}
The dynamics of the droplet is represented by a trajectory on this manifold.
Consider a trajectory $\pcon(t) \in \mm$ starting from initial state 
$\pcon(0)=\{\Gamma(0), u(x,y,0)\}\in\mm$,
\begin{equation}
\pcon(t)=\{\Gamma(t), u(x,y,t)\}\in \mm, \quad t\in[0,T]. 
\end{equation}

\subsubsection{ Obstacle constraint in the tangent plane: a convex closed subset }
Given a configuration state $\eta=\{\Gamma, u\}$, now we use the vertical velocity $v=\pt_t u$ and the contact line speed $v_{\cl}$ to describe the tangent plane $T_{\pcon}\mm$. 
To maintain the continuity at the contact line, these two velocities in the tangent plane satisfy the linear  restriction \eqref{com-angle}.

Since the geometric motion has an obstacle  condition $u\geq 0$,  define the coincidence set as
\begin{equation}
B_\eta:= \{(x,y)\in D;  u(x,y)=0 \},
\end{equation}
which is a closed subset of $D$.
Manifold $\mm$ has a boundary, i.e. $\{\eta\in\mm; B_\eta \neq \emptyset\}$.
 If $B_\eta \neq \emptyset$, then $\eta$ is on the boundary of the manifold $\mm$. In other words, the capillary surface $u$ touches the obstacle on the coincidence set, which will lead to a PVI as described below in Section \ref{sec_PVI}. 

Given $\eta=\{\Gamma, u\}$, define the following weighted $L^2$ space as the ambient space
\begin{equation}
X_{\eta}:=L^2(\Gamma \times D; \, \ud s \times  \frac{1}{\sqrt{1+|\nabla u|^2}} \ud x \ud y)
\end{equation}
with the weighted inner product
\begin{equation}
\la q_1, q_2 \ra_{X_\eta}:= \ssr \int_{\Gamma(t)}v_{\cl 1} v_{\cl 2} \ud s+ \zeta\int_{D(t)} v_1   v_2 \frac{\ud x \ud y}{\sqrt{1+ |\nabla u|^2}}
\end{equation}
for any $q_1=(v_{\cl 1}, v_1),\, q_2=(v_{\cl 2},v_2)\in X_\eta.$ Here the constants $\ssr$ and $\zeta$ are indeed the friction coefficients which will be explained later.

Then the tangent plane 
\begin{equation}\label{tm-n}
T_{\pcon} \mm:=\{(v_{\cl}, v)\in C^1(\Gamma\times D); ~v - |\nabla  u(\Gamma(t),t)| v_{\cl} =0,~~~v\geq 0 \text{ on }   B_\eta\} \subset X_\eta
\end{equation}
is a closed convex cone and is  embedded in the ambient space $X_\eta$ with the same inner product as $X_\eta$.
We remark the last inequality for $T_\eta\mm$ in \eqref{tm-n} becomes effective when $\eta$ sits on the boundary of the manifold $\mm$, i.e. $B_\eta \neq \emptyset$. If $B_\eta =\emptyset$, then the tangent plane $T_\eta \mm$ is a linear subspace of $X_\eta$.
}

\subsubsection{First variation of the free energy}
{
\blue
For an arbitrary trajectory $\tilde{\pcon}(s)=\{\tilde{\Gamma}(s), \tilde{u}(x,y,s)\}$ (a.k.a.  virtual displacement) starting from $\tilde{\pcon}(t)=\eta(t)$ at the tangent direction
\begin{equation}\label{direction}
 \tilde{\pcon}'(t)=\{\pt_t\tilde{\Gamma}(t), \pt_t \tilde{u}\} = \{\tilde{v}_{\cl} n_{\cl}, \tilde{v}\},  \quad \{\tilde{v}_{\cl},  \tilde{v}\} \in T_{\pcon(t)} \mm,
\end{equation}
we know
\begin{equation}\label{com-con-n}
\tilde{v}|_\Gamma=  |\nabla  u(\Gamma(t),t)| \tilde{v}_{\cl}.
\end{equation}
To ensure the volume preserving condition $\int_{D(t)} u \ud x\ud y = V, \, t\in[0,T]$, we calculate the first variation of extended free energy $\F(\eta,\lambda)$ on manifold $\mm\times \mathbb{R}$ for $\eta(t)\in \mm$ and a Lagrange multiplier $\lambda(t)$
\begin{equation}
\F(\eta(t), \lambda(t))= \F(\eta(t))- \lambda(t)(\int_{D(t)} u(t) \ud x \ud y -V).
\end{equation}
Then we have
\begin{equation}\label{energydissipation}
\begin{aligned}
& \frac{\ud }{\ud s}\big|_{s=t^+}  \F(\tilde{\pcon}(s), \tilde{\lambda}(s)) \\
=& \frac{\ud }{\ud s}\big|_{s=t^+} \F(\tilde{\eta}(t))- \frac{\ud }{\ud s}\big|_{s=t^+} \tilde{\lambda}(s)(\int_{\tilde{ D}(s)} \tilde{u}(s) \ud x \ud y -V)\\
=&  \frac{\ud }{\ud s}\big|_{s=t^+} \F(\tilde{\pcon}(s)) - \lambda(t)\int_{D(t)}\tilde{v}(t) \ud x \ud y - \tilde{\lambda}'(t)(\int_{D(t)} u \ud x \ud y -V),
 \end{aligned}
\end{equation} 
where we used \eqref{direction} and the  Reynolds  transport  \eqref{reynolds}  in the last equality.

Now we calculate the first variation for a generic $\F(\tilde{\pcon}(s))$ with the energy density function $G(u, \nabla u)$. From \eqref{com-con-n} and the Reynolds transport theorem, we have 
\begin{align}
&\frac{\ud }{\ud s}\big|_{s=t^+} \int_{\tilde{D}(t)} G( \tilde{u}(x,y,s), \nabla \tilde{u}(x,y,s)) \ud x \ud y \nonumber\\
=& \int_{\Gamma(t)} G|_\Gamma \tilde{v}_{\cl} \ud s + \int_{D(t)} \pt_u G \tilde{v} + \pt_{ \nabla u} G \cdot \nabla \tilde{v} \ud x \ud y \nonumber\\
=&  \int_{\Gamma(t)} G|_\Gamma \tilde{v}_{\cl} \ud s + \int_{D(t)} (\pt_u G - \nabla \cdot  (\pt_{ \nabla u} G) ) \tilde{v} \ud x \ud y +  \int_{\Gamma(t)}\tilde{v} (n_{\cl} \cdot \pt_{\nabla u} G)\ud s \nonumber\\
=& \int_{\Gamma}[G+|\nabla  u |(n_{\cl} \cdot \pt_{\nabla u} G) ]\big|_\Gamma \tilde{v}_{\cl} \ud s +   \int_{D(t)} (\pt_u G - \nabla \cdot  (\pt_{ \nabla u} G) ) \tilde{v} \ud x \ud y. \label{var}
\end{align}
Regard the contact line $\Gamma$ and the capillary surface $S$ as an open system.  Denote the two forces exerted by the droplet (the open system $\Gamma$ and $S$) to the environment as
\begin{equation}\label{unbalance}
F:=-\pt_u G + \nabla \cdot  (\pt_{ \nabla u} G) , \quad F_Y := -G-|\nabla  u |(n_{\cl} \cdot \pt_{\nabla u} G).
\end{equation}
Then  free energy dissipation can be expressed as the virtual work per unit time  done by the two virtual forces to the environment.
For the case that $G = \gamma_{\nlg} \sqrt{1+|\nabla u|^2} + (\gamma_{\nsl}-\gamma_{\nsg}) + \rho g \frac{u^2}{2}-\lambda u$, we know
\begin{equation}
F = -\rho g u + \gamma_{\nlg}\nabla \cdot \bbs{\frac{\nabla u}{\sqrt{1+|\nabla u|^2}}}+ \lambda = -\rho g u  +\gamma_{\nlg}  H + \lambda,  \quad F_Y =  \gamma_{\nlg} (\cos \theta_{\cl} - \cos \theta_Y),
\end{equation}
where $H = \nabla \cdot \bbs{\frac{\nabla u}{\sqrt{1+|\nabla u|^2}}} $ is the mean curvature.

With these two unbalanced forces, we know
\begin{equation}
\begin{aligned}
& \frac{\ud }{\ud s}\big|_{s=t^+}  \F(\tilde{\pcon}(s), \tilde{\lambda}(s)) \\
=&  - \int_\Gamma F_Y \tilde{v}_{\cl} \ud s - \int_{D(t)} F \tilde{v} \ud x \ud y   - \tilde{\lambda}'(t)(\int_{D(t)} u \ud x \ud y -V)
 \end{aligned}
\end{equation}
is a linear functional in terms of $(\tilde{v}_{\cl}, \tilde{v}, \tilde{\lambda}')$.
Then after imposing the volume constraint $\int_{D(t)} u \ud x \ud y =V$,   the energy dissipation is given by
\begin{equation}\label{virtual_work}
\begin{aligned}
\frac{\ud}{\ud s}\Big|_{s= t^+} \F(\tilde{\eta}(s), \tilde{\lambda}(s)) = - \int_\Gamma F_Y \tilde{v}_{\cl} \ud s - \int_{D(t)} F \tilde{v} \ud x \ud y =: \dot{\F}((\tilde{v}_{\cl}, \tilde{v}); \eta).
\end{aligned}
\end{equation}
We will see below, since the Rayleigh dissipation functional only has quadratic dissipation in terms of  $\tilde{v}_{\cl}$ and $\tilde{v}$, in order to ensure the Rayleighian is bounded below,  we need enforce the volume constraint $\int_{D(t)} u \ud x \ud y =V$. Otherwise, a relaxation model by introducing a dissipation in terms of $\lambda$ shall be used.
}

\subsection{ Onsager's principle and PVI for an obstacle problem}\label{sec_PVI}
With the specific  driven energy $\F$ on manifold $\mm$ and its first variation, we now start to derive the droplets dynamics using Onsager's principle with the obstacle as described below.

 \subsubsection{Friction damping for the motion of the droplet and the Rayleigh dissipation function}\label{sec_dissi}
From \eqref{virtual_work},  the droplet experiences unbalanced forces $(-F_Y, -F)$ exerted by the environment. These two forces can be modeled as friction forces done by the environment due to the motion of the droplet.  First,  the contact line friction force density is given by $-F_Y = -\ssr v_{\cl}$, where $\ssr $ is the friction damping coefficient per unit length for the contact line with the units of  mass/(length $\cdot$ time). Second,  the   friction force density on the capillary surface is given by $-F=-\zeta v_n $, where $\zeta$ is the friction damping coefficient per unit area for the capillary surface with the units of mass/(area $\cdot$ time). 
These are the  simplest linear response relations between the unbalanced forces $(F_Y, F)$ and the velocities $(v_{\cl}, v_n)$. 
If one also considers the  viscosity dissipation due to the fluids surrounding the capillary surface, we refer to \cite{gaoliu21} for a nonlocal linear response relation.

    Then we introduce the Rayleigh dissipation functional  (in the unit of work per unit time)   given by \cite{goldstein2002classical}
\begin{equation}\label{RQ}
Q=\frac{\ssr }{2} \int_{\Gamma (t)} |v_{\cl}|^2 \ud s +  \frac{\zeta}{2} \int_{S(t)} |v_n|^2   \ud s.
\end{equation} 
{\blue With the geometric configurations, contact line $\Gamma(t)$ and  capillary surface $u(x,y,t)$, the variation of free energy \eqref{energy} and Rayleigh's dissipation functional \eqref{RQ}, in the next section, we give detailed derivations of the governing equations using Onsager's principle with an obstacle.
}

\subsubsection{Euler-Lagrange equations derived by Onsager's principle}
{\blue 
Recast the Rayleigh dissipation functional as functional of $(v_{\cl}, v)$
\begin{equation}\label{QQ22}
Q((v_{\cl}, v); \eta)=\frac{\ssr }{2} \int_{\Gamma (t)} |v_{\cl}|^2 \ud s +  \frac{\zeta}{2} \int_{D(t)} |v|^2 \frac{1}{\sqrt{1+|\nabla u|^2}}   \ud x \ud y.
\end{equation}
Define the Rayleighian as 
\begin{equation}\label{eq:Ray}
\Ray((v_{\cl}, v); \eta):=  Q((v_{\cl}, v); \eta) + \dot{\F} (({v}_{\cl}, {v}); \eta).
\end{equation}
Then minimizing Rayleighian w.r.t $(v_{\cl}, v) \in T_\eta M$ 
\begin{equation}
(v_{\cl}, \pt_t u) = \argmin_{(v_{\cl}, v)\in T_\eta M} \Ray((v_{\cl}, v); \eta)
\end{equation}
yields the Euler-Lagrange equations. Indeed, notice the minimization $(v_{\cl}, \pt_t u)$ satisfies 
\begin{equation}
\Ray((v_{\cl} + \eps(\tilde{v}_{\cl} - v_{\cl}), \pt_t u + \eps (\tilde{v} - \pt_t u))) \geq \Ray (v_{\cl}, \pt_t u), \quad \text{ for any } ((\tilde{v}_{\cl}, \tilde{v}))\in T_\eta \mm.
\end{equation} 
Thus taking $\eps \to 0^+$ concludes the following parabolic variational inequality (PVI):  
\begin{equation}\label{PVI_tang}
\begin{aligned}
& \text{ find } \eta(t)=(\Gamma(t), u(t)) \in \mm, \quad (v_{\cl}, \pt_t u)\in T_\eta \mm \,\,  \text{ such that }\\
&\ssr  v_{\cl} = F_Y \quad \text{ on }\Gamma, \\
& \zeta\int_{D(t)} \frac{\pt_t u}{\sqrt{1+|\nabla u|^2}} (\tilde{ v}- \pt_t u) \ud x \ud y \geq \int_{D(t)} F (\tilde{v}-\pt_t u ) \ud x \ud y, \quad  \forall \tilde{v} \in Y_\eta,\\
& \int_{D(t)} u \ud x\ud y =V,
\end{aligned}
\end{equation}
where $Y_\eta:= \{v\in L^2(D); v\geq 0  \text{ on } B_\eta\}.$

By taking $\tilde{v} = 0$ or $\tilde{v} = 2\pt_t u$, we conclude the energy dissipation relation
\begin{equation}\label{O_id}
\frac{\ud }{\ud t}\F(\eta) = -2 Q((v_{\cl},v); \eta).
\end{equation}

\subsubsection{Equivalent PVI  derived by Onsager's principle in a subset}
Define a convex subset $K:=\{ v\in C_c^1(D),    v\geq 0\}$. We first minimize the Rayleighian defined in \eqref{eq:Ray} with any $\tilde{v}_{\cl}\in C(\Gamma)$, and the associated $\tilde{v}$. This gives the same equality $\ssr  v_{\cl} = F_Y$ for the moving contact line. 

Next, we      minimize the Rayleighian defined in \eqref{eq:Ray}  in a subset $(0, \tilde{v}) \subset T_\eta \mm$ for any $\tilde{v}\in K.$ This, together with the equality for the moving contact line, gives a new PVI
\begin{equation}\label{PVI_new}
\begin{aligned}
& \text{ find } \eta(t)=(\Gamma(t), u(t)) \in \mm, \quad (v_{\cl}, \pt_t u)\in T_\eta \mm \,\,  \text{ such that }\\
&\ssr  v_{\cl} = F_Y \quad \text{ on }\Gamma, \\
& \zeta\int_{D(t)} \frac{\pt_t u}{\sqrt{1+|\nabla u|^2}} (\tilde{ v}-  u) \ud x \ud y \geq \int_{D(t)} F (\tilde{v}- u ) \ud x \ud y, \quad  \forall  \tilde{v}\in K,\\
& \int_{D(t)} u \ud x\ud y =V.
\end{aligned}
\end{equation}

Indeed, the derivation relies on the fact that  $u+\eps(\tilde{v}-u)= (1-\eps) u + \eps \tilde{v}$ is a curve on $\mm$ with parameter $0<\eps<1$. Thus we know $(0, \tilde{v}-u)\in T_\eta \mm$ and $(0, \pt_t u + \eps (\tilde{v}-u))\in T_\eta \mm$ since $T_\eta \mm$ is a convex cone. So \eqref{PVI_new} is concluded after taking minimization for any   $(0, \pt_t u + \eps (\tilde{v}-u))\in T_\eta \mm$
\begin{equation}
\Ray((v_{\cl} , \pt_t u + \eps (\tilde{v} -  u))) \geq \Ray (v_{\cl}, \pt_t u)
\end{equation} 
and letting $\eps \to 0^+$.

By taking $\tilde{v}(\cdot) = h(\cdot, t) \pm \frac{s-t}{2} \pt_t h(\cdot, t)$ and $s\to t^+,$  we conclude the same energy dissipation relation \eqref{O_id}.

\subsubsection{Strong form of PVI}
Using the coincidence set $B_\eta$, by taking different $\tilde{v}$, the second equation in \eqref{PVI_tang} or in \eqref{PVI_new} can be recast as the same strong form. 
We have the following proposition on the equivalence of  \eqref{PVI_tang} and \eqref{PVI_new}.
\begin{prop}\label{prop_PVIs}
(i) Let $(\Gamma(t), u(t))$ be a smooth solution to the PVI \eqref{PVI_tang}. Then $(\Gamma(t), u(t))\in \mm$ satisfies the Euler-Lagrange equation in the strong form
\begin{equation}\label{PVI_strong}
\begin{aligned}
&  (v_{\cl}, \pt_t u)\in T_\eta \mm \,\,  \text{ such that }\\
&\ssr  v_{\cl} = F_Y \quad \text{ on }\Gamma, \\
&\left\{
\begin{array}{cc}
\zeta\frac{\pt_t u}{\sqrt{1+|\nabla u|^2}}  = F \quad & \text{ for } (x,y)\notin B_\eta, \text{ or } F\geq 0 \\
\pt_t u = 0 & \text{ for } (x,y)\in B_\eta \text{ and } F<0.
\end{array}
\right. \quad \text{in } D,\\
& \int_{D(t)} u \ud x\ud y =V.
\end{aligned}
\end{equation}
(ii) Let $(\Gamma(t), u(t))$ be a smooth solution to the PVI \eqref{PVI_new}. Then $(\Gamma(t), u(t))\in \mm$ satisfies the same Euler-Lagrange equation \eqref{PVI_strong}.
\end{prop}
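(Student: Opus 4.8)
The plan is to derive the strong form \eqref{PVI_strong} from each of the two PVIs by making judicious choices of the test function $\tilde v$ in the respective variational inequality, localizing the inequality to obtain pointwise information on $D$, and then splitting into the cases $(x,y)\notin B_\eta$ versus $(x,y)\in B_\eta$. The contact line equation $\ssr v_{\cl}=F_Y$ on $\Gamma$ is already an equality in both \eqref{PVI_tang} and \eqref{PVI_new}, so there is nothing to prove there; likewise the volume constraint $\int_{D(t)}u\ud x\ud y=V$ is carried along verbatim. So the whole content is in the middle (variational) line.

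For part (i): the key observation is that $Y_\eta=\{v\in L^2(D):v\ge 0\text{ on }B_\eta\}$ contains $\pt_t u$ plus any perturbation that is unconstrained off $B_\eta$ and nonnegative-after-adding on $B_\eta$. First I would test with $\tilde v=\pt_t u\pm\varphi$ where $\varphi\in C_c^1(D\setminus B_\eta)$ is arbitrary (of either sign); since both choices lie in $Y_\eta$, the inequality $\zeta\int \frac{\pt_t u}{\sqrt{1+|\nabla u|^2}}(\tilde v-\pt_t u)\ge\int F(\tilde v-\pt_t u)$ gives $\pm\int_{D\setminus B_\eta}\bigl(\zeta\frac{\pt_t u}{\sqrt{1+|\nabla u|^2}}-F\bigr)\varphi\ge 0$, hence $\zeta\frac{\pt_t u}{\sqrt{1+|\nabla u|^2}}=F$ pointwise on $D\setminus B_\eta$ by the fundamental lemma of the calculus of variations. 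Next, on $B_\eta$ I would test with $\tilde v=\pt_t u+\psi$ for $\psi\ge 0$ supported near a point of $B_\eta$ (admissible since on $B_\eta$ we then have $\tilde v=\psi\ge 0$, using $\pt_t u=0$ there — which follows from $u\equiv 0$ on $B_\eta$ for a smooth solution touching the obstacle, so $\pt_t u\le 0$ there, and combined with the admissibility constraint $\pt_t u\ge0$ on $B_\eta\subset$ the tangent-cone condition, $\pt_t u=0$ on $B_\eta$). That one-sided test gives $\zeta\cdot 0\ge\int_{B_\eta}F\psi$ for all $\psi\ge0$, i.e. $F\le 0$ on $B_\eta$. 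Combining: on $B_\eta$ either $F<0$ and $\pt_t u=0$, or $F=0$ and then $\zeta\frac{\pt_t u}{\sqrt{1+|\nabla u|^2}}=0=F$, which is exactly the ``$F\ge 0$'' branch degenerately; this is precisely the case split written in \eqref{PVI_strong}.

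For part (ii): the argument is parallel but now the test functions range over $K=\{v\in C_c^1(D):v\ge0\}$ and the ``reference point'' in the inequality is $u$ rather than $\pt_t u$. Off $B_\eta$ we have $u>0$, so for any $\varphi\in C_c^1$ supported in $D\setminus B_\eta$ the function $\tilde v=u\pm\delta\varphi\in K$ for small $\delta>0$; plugging in and letting $\delta\to0$ yields again $\zeta\frac{\pt_t u}{\sqrt{1+|\nabla u|^2}}=F$ on $D\setminus B_\eta$. Near $B_\eta$, using $u\ge0$, the choice $\tilde v=u+\psi$ with $\psi\ge0$ is admissible and gives $\zeta\int\frac{\pt_t u}{\sqrt{1+|\nabla u|^2}}\psi\ge\int F\psi$; since $\pt_t u=0$ on $B_\eta$ (same reasoning as above), restricting $\psi$ to be supported in a shrinking neighborhood of $B_\eta$ and using continuity forces $F\le0$ on $B_\eta$, while the choice $\tilde v=0\in K$ gives the matching inequality $\int F u\ge \zeta\int\frac{\pt_t u}{\sqrt{1+|\nabla u|^2}}u$ which, together with $u\equiv 0$ on $B_\eta$ and the already-established equality off $B_\eta$, is consistent and pins down the same dichotomy. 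Hence \eqref{PVI_new} also implies \eqref{PVI_strong}.

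\textbf{Main obstacle.} The routine part is the localization/fundamental-lemma step. The delicate point is the bookkeeping at the free boundary $\pt B_\eta$ and the justification that $\pt_t u=0$ on the coincidence set for a \emph{smooth} solution: one must argue that where the graph touches the obstacle $u=0$, smoothness plus the one-sided obstacle constraint $u\ge0$ forces $\nabla u=0$ and $\pt_t u=0$ on the interior of $B_\eta$, and handle $\pt B_\eta$ as a measure-zero set so it does not affect the pointwise a.e. conclusions. Care is also needed to check that the perturbed test functions genuinely lie in $T_\eta\mm$ (resp. $K$), in particular that the linear contact-line constraint $\tilde v|_\Gamma=|\nabla u|\,\tilde v_{\cl}$ is respected — but since all perturbations used on $D$ are taken compactly supported away from $\Gamma$, this is automatic, which is exactly why splitting the minimization (contact line first, interior second) is the clean way to organize the proof.
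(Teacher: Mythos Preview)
Your proposal is correct and is precisely the standard case-by-case localization argument that the paper has in mind; in fact the paper does not spell out a proof at all, stating only that ``the proof of this proposition is standard and can be derived by considering different cases that appear in \eqref{PVI_strong}.'' Your justification that $\pt_t u=0$ on $B_\eta$ for a smooth-in-time solution (local minimum in $t$ of $s\mapsto u(x,y,s)\ge 0$) and your handling of $\pt B_\eta$ as a null set are exactly the points one has to check, and you have identified them correctly.
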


The proof of this proposition is standard and can be derived by considering different cases that appear in \eqref{PVI_strong}.
From this proposition, we know  two PVIs are equivalent and our projection method will rely on PVI \eqref{PVI_new}.

In short words, we will alternately  conduct the following two steps: \\
Step (i), solve the equality governing equations
\begin{equation}\label{MMC}
\begin{aligned}
\ssr  v_{\cl} = \gamma_{\nlg} (\cos \theta_{\cl} - \cos \theta_Y) \quad  \text{ on }\Gamma(t), \\
\zeta\frac{\pt_t u}{\sqrt{1+|\nabla u|^2}}  =  -\rho g u  +\gamma_{\nlg}  H + \lambda \quad  \text{ in }D(t),\\
u=0 \quad \text{ on }\Gamma,\\
\int_{D(t)} u \ud x \ud y =V
\end{aligned}
\end{equation}
with initial data $\eta(0)=\{\Gamma(0), u(x,y,0)\}$ and initial volume $V$. 
Then we do
Step (ii), the projection  to the manifold $\mm$.
We will discuss the schemes,  unconditional stability and convergence analysis  in details in Section \ref{sec_ss}.

}

\section{Numerical schemes, stability and convergence analysis}\label{sec_ss}
In this section, we first propose a numerical scheme for droplets dynamics with merging and splitting, which are  extensions of the 1st/2nd order schemes developed in \cite{gao2020gradient} for a single droplet without topological changes. To incorporate the splitting due to an impermeable obstacle, we need to solve the PVI \eqref{PVI_new} instead of PDEs. Inspired by a nonlinear version of Trotter-Kato's product formula, a projection method, which efficiently splits the PDE solver and the obstacle constraint,  will be adapted.
Then in Section \ref{sec_stability},  we prove the unconditional  stability of the projection method for the moving contact line coupled with the motion by mean curvature with an obstacle. In the stability analysis in Proposition \ref{prop_stability}, we focus on the key difficulties due to the moving domain and the obstacle, and consider    a droplet  placed on a horizontal plane without gravity and volume constraint. Finally, in Section \ref{sec_conv} we include the gravity and volume constraint and give the convergence analysis of the projection method  for droplets with a pinning  contact line; see Theorem \ref{thm1}.
\subsection{Numerical schemes based on explicit boundary moving and the projection method}\label{sec_scheme}

In this subsection, we present a numerical scheme for PVI \eqref{PVI_new} describing the droplet dynamics with merging and splitting.
First, we  further split the  equality solver for \eqref{MMC}  into two steps: (i) explicit boundary updates and (ii) semi-implicit capillary surface updates.
The unconditional stability for the explicit 1D boundary updates is proved in \cite{gao2020gradient}, which efficiently decouples the computations of the boundary evolution and the capillary surface updates. The semi-implicit  capillary surface updates without obstacles but with volume constraint can be convert to a standard elliptic solver at each step. Next, to enforce the impermeable obstacle,  (iii) we project the capillary surface to the manifold $\mm$. This step  has explicit formula so also keeps the efficiency. Finally, to incorporate the phase transition information at  splitting points,  (iv) we detect  splitting points after a threshold and add new contact line updates after that; see detailed explanations for the phase transition at emerged contact lines in Section \ref{sec_PVI_2D},

\subsubsection{PVI for 2D  droplet placed on a groove-textured  and inclined surface}\label{sec_PVI_2D} For simplicity of the representation, we only describe  numerical schemes for 2D droplets. Hence  we  first use the PVI obtained in \eqref{PVI_new} to derive  the governing PVI for  a 2D droplet placed on a groove-textured  and inclined surface and explain the phase transition happening at emerged contact lines after splitting.
   
Given a groove-textured  impermeable surface described by a graph function $w(x)$, a droplet is then described by
$
A:= \{(x,y); a\leq x\leq b , w(x)\leq y\leq u(x)+w(x) \}. 
$
Following the convention, we use the Cartesian coordinate system built on an inclined plane with effective inclined angle $\theta_0$ such that $-\frac{\pi}{2}<\theta_0<\frac{\pi}{2}$ and  $(\tan \theta_0) x$ is the new $x$-axis; see Figure \ref{fig_show}. Denote the height function as
 $$h(x,t):= u(x,t)+w(x).$$
To be consistent with  height function $u$ in the last section, we choose the configuration states of this droplet as the relative height function (capillary surface) $u(x,t)\geq 0$ and  partially wetting domain $a(t)\leq x \leq b(t)$ with free boundaries $a(t), b(t).$  Consider the manifold
\begin{equation}
\mm:= \{a, b, u(x);~u(x)\geq 0,\, u(x)\in C_0^2(a,b)\cap L^2(a,b)\}.
\end{equation}
Consider the energy functional associated with the groove-textured surface
\begin{equation}\label{energy-r}
\begin{aligned}
\F(\pcon(t))= &\gamma_{\nlg} \int_{a(t)}^{b(t)} \sqrt{1+ (\pt_x (u+w))^2} \ud x + (\gamma_{\nsl}-\gamma_{\nsg}) \int_{a(t)}^{b(t)} \sqrt{1+ (\pt_x w)^2} \ud x\\
&+ \rho g  \int_{a(t)}^{b(t)}\int_{h_0}^{u+h_0}(y\cos \theta_0+x\sin\theta_0)\ud y \ud x, 
\end{aligned}
\end{equation}
where $\rho$ is the density of the liquid, $g$ is  the gravitational acceleration.
Then we have
\begin{equation}\label{tm_G}
\begin{aligned}
G(u,x)=\sqrt{1+ (\pt_x (u+w))^2}+ \sigma \sqrt{1+ (\pt_x w)^2}+\frac{\rho g}{\gamma_{\nlg}} (\frac{u^2\cos\theta_0}{2}+ \cos\theta_0 u w+ x \sin\theta_0 u)\\
\pt_u G = \frac{\rho g}{\gamma_{\nlg}} \left((u+w)\cos\theta_0+x \sin\theta_0\right), \quad \pt_{u_x} G = \frac{\pt_x(u+w)}{\sqrt{1+ (\pt_x(u+w))^2}}.
\end{aligned}
\end{equation}


\begin{rem}\label{rem1}
Let the density of gas outside the droplet be $\rho_0=0$. We denote the capillary coefficient as $\varsigma:= \frac{\rho g}{\gamma_{\nlg}}>0$ and the capillary length as $L_c:=\frac{1}{\sqrt{\varsigma}}.$  For a droplet with volume $V$, its  equivalent  length (characteristic length)  $L$ is defined as $V=\frac{4\pi}{3}L^3$ in 3D and $V=\pi L^2$ in 2D.  The Bond number 
$\Bo:=(\frac{L}{L_c})^2=\varsigma L^2$ shall be small enough to observe the capillary effect \cite{de2013capillarity}. 
In the inclined case, for a droplet with volume $V$ in 2D, the effective Bond number is
$
\Bo:=\left(\frac{L}{L_c}\right)^2 \cos \theta_0=\varsigma L^2 \cos \theta_0.
$ After dimensionless argument, we use  new dimensionless quantities $\beta$, $\kappa$, $V$ in the governing equation below.
\end{rem}

Recall $K=\{ v\in C_c^1(D),    v\geq 0\}$.  Then by the same derivations as  \eqref{PVI_new}, we have
 the governing PVI for the 2D droplet
\begin{equation}\label{wet-eq-r}
\begin{aligned}
\int_{a(t)}^{b(t)}\bbs{\beta \frac{\pt_t h(x,t)}{\sqrt{1+ (\pt_xh)^2}}- \frac{\pt }{\pt x}\left(  \frac{\pt_xh}{\sqrt{1+ (\pt_xh)^2}}\right)+\kappa (h\cos\theta_0+x \sin\theta_0)-\lambda(t)} (\tilde{v}-u ) \ud x\geq 0,\\
\text{ for any }\tilde{v}(x)\in K,\\
h=u+w, \quad u\geq 0 \quad \text{ in } D(t),\\
 u(a(t), t)=u(b(t), t)=0,\\
a'(t)= \sigma  \sqrt{1+ (\pt_x w)^2}+\frac{1+ \pt_x h  \pt_x w}{\sqrt{1+(\pt_xh)^2}}\Big|_{x=a}=\frac{1}{\cos \theta_{0a}}( \cos \theta_a-\cos \theta_Y),\\
b'(t)=- \sigma  \sqrt{1+ (\pt_x w)^2}-\frac{1+ \pt_x h \pt_x w}{\sqrt{1+(\pt_xh)^2}}\Big|_{x=b}=-\frac{1}{\cos \theta_{0b}}( \cos \theta_b-\cos \theta_Y),\\
\int_{a(t)}^{b(t)} u(x,t) \ud x = V,
\end{aligned}
\end{equation}
where $\theta_a$, $\theta_b$ are two contact angles at $a(t)$, $b(t)$  and  $\pt_x w|_a = \tan \theta_{0a}$, $\pt_x h|_a= \tan(\theta_{0a}+ \theta_a)$ and $\pt_x w|_b =- \tan \theta_{0b}$ and $\pt_x h|_b= -\tan(\theta_{0b}+ \theta_b)$; see Fig \ref{fig_show}.  It is easy to check the steady state $a'(t)=b'(t)=0$ recovers  Young's angle condition.

The parabolic variational inequality (PVI) above is able to describe the merging and splitting of several drops. However, whenever topological changes happen, \eqref{PVI_new} can not describe the correct  phase transition at the splitting/merging points. For instance, when one droplet splits into two droplets,  physically, at the splitting domain $D_0:=\{(x,y)\in  \text{int} D;u(x,t)=0\}$, the interface between gas and liquid becomes the interface between gas and solid, therefore   new contact lines with competitions from three phases appear.  Instead, the dynamics governing by PVI \eqref{PVI_new} does not contain these phase transition information but only leads to a nonphysical motion at the splitting domain $D_0$, i.e. droplet is allowed to move along the boundary $D_0$.
We propose the following natural method to incorporate the phase transition information into dynamics  after splitting. (I)  We first detect when and where the phase transition happens by recording the new generated contact lines. (II) Then  surface energies from three phases take over the  dynamics posterior to splitting.
That is to say, the generated two droplets have the same governing equation with \eqref{PVI_new} respectively and the volume of each droplet is preserved over time; see Step 5 in the algorithm below.

\subsubsection{First order numerical  scheme: explicit boundary updating with the projection method }\label{sec_1st}
~~

Step 1.  Explicit boundary updates. Compute the one-side approximated derivative of $h^n$ at $b^n$ and $a^n$, denoted as $(\pt_x h^n)_N$ and $(\pt_x h^n)_0$. Then by  the  dynamic boundary condition in \eqref{wet-eq-r}, we update $a^{n+1}, b^{n+1}$ using 
\begin{equation}
\begin{aligned}\label{end_a}
\frac{a^{n+1}-a^n}{\Delta t}&= \sigma \sqrt{1+ (\pt_x w)_0^2} +\frac{1+ (\pt_x h^n)_0(\pt_x w)_0 }{\sqrt{1+ (\pt_x h^n)_0^2}} , \quad\\
\frac{b^{n+1}-b^n}{\Delta t}&= -\sigma \sqrt{1+ (\pt_x w)_N^2} -\frac{1+ (\pt_x h^n)_N(\pt_x w)_N }{\sqrt{1+ (\pt_x h^n)_N^2}}.
\end{aligned}
\end{equation}

Step 2. Rescale $h^n$ from $[a^n, b^n]$ to $[a^{n+1}, b^{n+1}]$ with $O(\Delta t ^2)$ accuracy using a  arbitrary Lagrangian-Eulerian discretization. 
For $x^{n+1}\in[a^{n+1}, b^{n+1}]$, denote the map from moving grids at $t^{n+1}$ to $t^n$ as
\begin{equation}
x^n:= a^n + \frac{b^n-a^n}{b^{n+1}-a^{n+1}}(x^{n+1}-a^{n+1})\in[a^n, b^n].
\end{equation}
Define the rescaled solution for $h^n$ as
\begin{equation}\label{inter-u-0}
    h^{n*}(x^{n+1}):= h^n(x^n)+ \pt_x h^n (x^n)(x^{n+1}-x^n).
\end{equation}
 It is easy to verify by using the Taylor expansion
$
    h^{n*}(x^{n+1}) = h^n(x^{n+1}) + O(|x^n-x^{n+1}|^2);
$
see \cite[Appendix B]{gao2020gradient}

Step 3. Capillary surface updates without impermeable obstacle constraint, but with volume preserving constraint. Update $\tilde{ h}^{n+1}$ and $\lambda^{n+1}$ semi-implicitly.
\begin{equation}\label{tm313}
\begin{aligned}
\frac{\beta}{\sqrt{1+ (\pt_x h^{n*})^2}} \frac{\tilde{ h}^{n+1}-h^{n*}}{\Delta t}= \frac{\pt }{\pt x}\left( \frac{\pt_x \tilde{ h}^{n+1}}{\sqrt{1+ (\pt_x h^{n*})^2}} \right)-\kappa (h^{n+1}\cos \theta_0 + x^{n+1}\sin \theta_0)+\lambda^{n+1}, &\\
\tilde{ h}^{n+1}(a^{n+1})=w(a^{n+1}), \quad \tilde{ h}^{n+1}(b^{n+1})=w(b^{n+1})&,\\
\int_{a^{n+1}}^{b^{n+1}} \tilde{u}^{n+1}(x^{n+1}) \ud x^{n+1} = V,&
\end{aligned}
\end{equation}
where the independent variable  is $x^{n+1}\in(a^{n+1},b^{n+1})$.

Step 4. Enforce impermeable obstacle condition by the  projection.
Find $h^{n+1}$ and  $\lambda$ satisfying
\begin{equation}\label{bisect}
\left\{
\begin{array}{cc}
h^{n+1} = \max\{ \tilde{h}^{n+1} + \lambda , w\},\\
\int_{D} h^{n+1} \ud x \ud y = V + \int_{D} w \ud x \ud y.
\end{array}
\right.
\end{equation}
This is indeed project $\tilde{h}$ to the manifold $\mm$ with the volume constraint $V$ of the droplet; see  Lemma \ref{lem_proj}. \eqref{bisect} can be implemented using a bisection search for $\lambda$.

Step 5. Phase transition and emerge triple points. Let $\eps>0$ be a threshold parameter. If the length of splitting domain $D_0>\eps$, then record two new endpoints $c^{n+1}, d^{n+1}$ (emerged triple points).  Regard the current profile $h^{n+1}$ on $(a^{n+1}, c^{n+1})$ and $(d^{n+1}, b^{n+1})$ as two independent droplets and enforce the moving contact line  boundary conditions at these two emerged triple points $c^{n+1}, d^{n+1}$. The total volume of these two droplet remains same.

 \textit{First order scheme for merging:} The numerical scheme for the dynamics of two independent droplets with endpoints $a_1^{n}, b_1^n$ ($a_2^n, b^n_2$ resp.) are same as Step 1-3. To detect the merging of two independent droplets, at each time stepping $t^n$, one also need a threshold parameter $\eps>0$ such that we treat two droplets as one big droplet if $|a_2^n-b_1^n|<\eps$.
 
 The projection method for droplets dynamics above also works for  second order scheme, which replaces Step 1-3 by midpoint schemes. We omit details and refer to \cite{gao2020gradient}.
 
 \begin{rem}
 In Step 5, the additional moving contact line boundary condition at the emerged triple points after splitting is just a  numerical algorithm to realize the phase transitions from two phases to three phases. To model this  procedure in a variational formulation is still an open question. In the following stability analysis,  convergence analysis and accuracy check, we will not include Step 5 for the enforced phase transition at emerged triple points.
 \end{rem}

{\blue

\subsection{Unconditional stability of the projection method for the moving contact line and the motion by mean curvature}\label{sec_stability}
In this section, we show the unconditional  stability of the projection method for the moving contact line and the motion by mean curvature. To focus on the key difficulties due to the moving domain and the obstacle, we present an unconditional stability analysis for  the case the droplet is placed on a horizontal plane without gravity and volume constraint. We will first present a projection method with a small modification for the stretching term.

\subsubsection{A projection method for  the moving contact line and the motion by mean curvature}

To focus on the moving contact line and the obstacle problem, we first present a simplified  projection method for droplets  placed on a horizontal plane without gravity and volume constraint.\\
Step 1.  Explicit boundary updates. We update $a^{n+1}, b^{n+1}$ using 
\begin{equation}
\begin{aligned}\label{end_a}
\frac{a^{n+1}-a^n}{\Delta t}&= \sigma  +\frac{1 }{\sqrt{1+ (\pt_x h^n)^2}}\Big| _{a^n} , \quad \frac{b^{n+1}-b^n}{\Delta t}&= -\sigma  -\frac{1 }{\sqrt{1+ (\pt_x h^n)^2}}\Big|_{b^n}.
\end{aligned}
\end{equation}

Step 2. Rescale $h^n$ from $[a^n, b^n]$ to $[a^{n+1}, b^{n+1}]$ such that
\begin{equation}\label{inter-u-0}
    h^{n*}(x^{n+1}):= h^n(x^n) = U(Z), \quad x^{n+1}\in[a^{n+1}, b^{n+1}],
\end{equation}
where $Z\in[0,1]$ is a fixed domain variable satisfying
 \begin{equation}\label{changing1}
Z= \frac{x^n-a^n}{b^n -a^n} = \frac{x^{n+1}-a^{n+1}}{b^{n+1}-a^{n+1}}.
\end{equation}


Step 3. Capillary surface updates without impermeable obstacle constraint. Update $\tilde{ h}^{n+1}$ and $\lambda^{n+1}$ implicitly.
\begin{equation}\label{tm313_sb}
\begin{aligned}
&\frac{\beta}{\sqrt{1+ (\pt_x \tilde{h}^{n+1})^2}} \frac{\tilde{ h}^{n+1}-h^{n*}}{\Delta t}
= \frac{\pt }{\pt x}\left( \frac{\pt_x \tilde{ h}^{n+1} +\pt_x h^{n*}}{ \sqrt{1+ (\pt_x \tilde{h}^{n+1})^2}+ \sqrt{1+ (\pt_x h^{n*})^2}} \right), \\
&\tilde{ h}^{n+1}(a^{n+1})=0, \quad \tilde{ h}^{n+1}(b^{n+1})=0.
\end{aligned}
\end{equation}
where the independent variable  is $x^{n+1}\in(a^{n+1},b^{n+1})$.

Step 4. Projection due to the impermeable obstacle.
\begin{equation}
h^{n+1} = \max\{\tilde{h}^{n+1}, 0\}.
\end{equation}

\subsubsection{Unconditional stability  for the projection method} Now we prove a proposition for the unconditional stability  of the simplified projection method above.
\begin{prop}\label{prop_stability}
For any $T>0$, let $\Delta t$ be the time step and  $N=\frac{T}{\Delta t}$. Suppose $h^n$, $h^{n^*}$ and $\tilde{h^n}$ are obtained from the above projection method.  Then we have the following stability estimates
\begin{enumerate}[(i)]
\item for endpoints
 \begin{equation}\label{stable_ab0}
\begin{aligned}
a^0 + \sigma T \leq {\blue a^{n}} \leq a^0 + (1+\sigma)T, \qquad b^0 - (1+\sigma) T \leq {\blue b^{n} } \leq b^0 - \sigma T;
\end{aligned}
\end{equation}
\item for the capillary surface
\begin{equation}\label{stable_cap}
\max_{1\leq n\leq N} \int_{a^n}^{b^n} \sqrt{1+|\nabla h^{n}|^2} \ud x +\frac{\beta}{\Delta t} \sum_{n=0}^{N-1}  \int_{a^{n+1}}^{b^{n+1}} \frac{|\tilde{h}^{n+1} -h^{n*}|^2}{ \sqrt{1+ (\pt_x \tilde{h}^{n+1})^2} } \ud x \leq  8 T + 2 \int_{a^0}^{b^0} \sqrt{1+ (\pt_x h^{0})^2}  \ud x.
\end{equation}
\end{enumerate}
\end{prop}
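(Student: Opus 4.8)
The plan is to prove the two estimates separately, with part (i) an easy induction and part (ii) the real content. For part (i), I would argue by induction on $n$: the boundary update in \eqref{end_a} gives $a^{n+1}-a^n = \Delta t\,(\sigma + (1+(\pt_x h^n)^2)^{-1/2}|_{a^n})$, and since $0 < (1+(\pt_x h^n)^2)^{-1/2} \le 1$ always holds (no matter what $h^n$ is, the stretching factor is in $(0,1]$), each increment lies in the interval $[\sigma\Delta t,(1+\sigma)\Delta t]$. Summing over $n$ steps with $N\Delta t = T$ (and using $n\Delta t \le T$) gives the two-sided bound for $a^n$; the bound for $b^n$ is symmetric. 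This requires no regularity assumption beyond $h^n$ being defined, which is why the authors can state it cleanly.

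For part (ii), the strategy is the standard energy/dissipation argument for a minimizing-movement scheme, adapted to the moving domain. The key is that Step 3 is a discretization of motion by mean curvature and should dissipate the arclength functional $E(h) := \int_a^b \sqrt{1+(\pt_x h)^2}\,\ud x$, while Steps 1, 2, 4 contribute only controlled boundary terms. Concretely I would: (a) multiply the elliptic equation \eqref{tm313_sb} by $(\tilde h^{n+1} - h^{n*})$ and integrate over $(a^{n+1},b^{n+1})$; the left side yields the positive dissipation term $\frac{\beta}{\Delta t}\int \frac{|\tilde h^{n+1}-h^{n*}|^2}{\sqrt{1+(\pt_x\tilde h^{n+1})^2}}\,\ud x$; (b) integrate the right side by parts — the chosen symmetric-average discretization $\frac{\pt_x\tilde h^{n+1}+\pt_x h^{n*}}{\sqrt{1+(\pt_x\tilde h^{n+1})^2}+\sqrt{1+(\pt_x h^{n*})^2}}$ is engineered precisely so that the resulting bilinear form telescopes: $(\pt_x\tilde h^{n+1}+\pt_x h^{n*})(\pt_x\tilde h^{n+1}-\pt_x h^{n*}) = (\pt_x\tilde h^{n+1})^2 - (\pt_x h^{n*})^2$, and dividing by the sum of the two square roots gives exactly $\sqrt{1+(\pt_x\tilde h^{n+1})^2} - \sqrt{1+(\pt_x h^{n*})^2}$, so the interior integral becomes $-\big(E_{n+1}(\tilde h^{n+1}) - E_{n+1}(h^{n*})\big)$ where $E_{n+1}$ denotes arclength over $(a^{n+1},b^{n+1})$; (c) the boundary terms from the integration by parts vanish because both $\tilde h^{n+1}$ and $h^{n*}$ are zero at $a^{n+1},b^{n+1}$. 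This gives the one-step inequality $E_{n+1}(\tilde h^{n+1}) + \frac{\beta}{\Delta t}\int\frac{|\tilde h^{n+1}-h^{n*}|^2}{\sqrt{1+(\pt_x\tilde h^{n+1})^2}}\,\ud x \le E_{n+1}(h^{n*})$.

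It then remains to compare $E_{n+1}(h^{n*})$ with $E_n(h^n)$ (the rescaling in Step 2) and $E_{n+1}(h^{n+1})$ with $E_{n+1}(\tilde h^{n+1})$ (the projection in Step 4), and then sum. For Step 4, since $h^{n+1}=\max\{\tilde h^{n+1},0\}$ and $\tilde h^{n+1}\ge 0$ where they differ only on a set where $\tilde h^{n+1}<0$, one has $|\pt_x h^{n+1}|\le|\pt_x\tilde h^{n+1}|$ a.e. (the positive part is $1$-Lipschitz), so $E_{n+1}(h^{n+1})\le E_{n+1}(\tilde h^{n+1})$ — projection only decreases arclength. For Step 2, the change of variables $x^{n+1}\mapsto x^n$ via the affine map \eqref{changing1} stretches the $x$-interval by the factor $r_n := \frac{b^{n+1}-a^{n+1}}{b^n-a^n}$ and rescales $\pt_x h$ accordingly; a direct computation of arclength under this affine reparametrization, together with the bound $|r_n - 1|\le C\Delta t$ coming from part (i) (the endpoints move at speed at most $1+\sigma \le 2$), gives $E_{n+1}(h^{n*}) \le E_n(h^n) + C\Delta t\,(1 + E_n(h^n))$ — or more sharply, one extracts an additive error $\le 4\Delta t$ per endpoint so that summing yields the $8T$ on the right-hand side of \eqref{stable_cap}. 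Chaining $E_{n+1}(h^{n+1}) \le E_{n+1}(\tilde h^{n+1}) \le E_{n+1}(h^{n*}) \le E_n(h^n) + (\text{boundary error})$, absorbing the dissipation term, and summing $n=0,\dots,N-1$ produces the claimed bound.

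The main obstacle I anticipate is Step (c)/Step 2: controlling the arclength distortion under the affine rescaling of the domain and verifying the constant is exactly $8T$ (rather than merely $O(T)$) — this needs a careful, explicit arclength computation under the change of variables in \eqref{changing1} and the sharp endpoint-speed bound $|a^{n+1}-a^n|, |b^{n+1}-b^n| \le 2\Delta t$ from part (i), rather than just Gronwall-type absorption. The telescoping identity in Step (b) is the clever ingredient but is purely algebraic once one notices the design of the averaged coefficient; the positivity of the projection step is immediate. I would organize the write-up as: (1) prove (i); (2) one-step dissipation identity for Step 3; (3) monotonicity under projection (Step 4); (4) arclength distortion estimate under rescaling (Step 2); (5) sum up.
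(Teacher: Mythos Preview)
Your proposal is correct and follows essentially the same route as the paper: part (i) by summing the per-step increment bounds $\sigma\Delta t\le a^{n+1}-a^n\le(1+\sigma)\Delta t$; part (ii) by multiplying \eqref{tm313_sb} by $\tilde h^{n+1}-h^{n*}$, exploiting the algebraic identity that the averaged coefficient produces the exact difference $\sqrt{1+(\pt_x\tilde h^{n+1})^2}-\sqrt{1+(\pt_x h^{n*})^2}$, bounding the rescaling distortion via the fixed-domain change of variables \eqref{changing1}, and using $|\pt_x(\tilde h^{n+1})_+|\le|\pt_x\tilde h^{n+1}|$ for the projection step.

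One small correction on the constant: the paper obtains a single additive error of $4\Delta t$ per time step for the rescaling (not ``$4\Delta t$ per endpoint''), via
\[
\Bigl|\int_{a^{n+1}}^{b^{n+1}}\!\!\sqrt{1+(\pt_x h^{n*})^2}\,\ud x-\int_{a^{n}}^{b^{n}}\!\!\sqrt{1+(\pt_x h^{n})^2}\,\ud x\Bigr|
\le \bigl|(b^{n+1}-a^{n+1})-(b^n-a^n)\bigr|\le 4\Delta t,
\]
which after telescoping gives $\max_n E_n(h^n)\le E_0(h^0)+4T$ and separately $\sum_n(\text{dissipation})\le E_0(h^0)+4T$; adding these two bounds is what produces the $8T+2E_0(h^0)$ on the right of \eqref{stable_cap}. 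Your step (5) would reveal this once you carry out the summation.
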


\begin{proof}

First, we give the stability estimates for endpoints $a^n, b^n$.

From \eqref{end_a}, we know 
\begin{equation}\label{stable_ab}
\sigma \Delta t \leq a^{n+1}-a^n \leq( \sigma+1) \Delta t, \quad  -(\sigma+1)\Delta t \leq  b^{n+1}-b^n\leq -\sigma \Delta t.
\end{equation}
Then telescoping shows \eqref{stable_ab0}.

Second, we give the stability estimates for $\tilde{h}^n$.

From \eqref{changing1}
and elementary calculations we list the following expressions in terms of $Z$ variable
\begin{align}\label{changing1}
&h^n(x^{n}) = U(Z)=h^{n*}(x^{n+1}), \quad \pt_x h^n(x^n) = \frac{\pt_Z U(Z)}{b^n-a^n}, \quad 
\pt_x h^{n*} (x^{n+1}) =   \frac{\pt_Z U(Z) }{b^{n+1}-a^{n+1}}.
\end{align}
Multiplying the equation \eqref{tm313_sb}
by $v:= h^{n+1}-h^{n*}$ and integrating from $a^{n+1}$ to $b^{n+1}$, we obtain
\begin{equation}
\begin{aligned}
\frac{\beta}{\Delta t} \int_{a^{n+1}}^{b^{n+1}} \frac{|\tilde{h}^{n+1} -h^{n*}|^2}{ \sqrt{1+ (\pt_x h^{n+1})^2}} \ud x &+ \int_{a^{n+1}}^{b^{n+1}} \frac{|\pt_x \tilde{h}^{n+1}|^2 - |\pt_x h^{n*}|^2}{ \sqrt{1+ (\pt_x \tilde{h}^{n+1})^2}+ \sqrt{1+ (\pt_x h^{n*})^2} } \ud x =0.
\end{aligned}
\end{equation}
This gives
\begin{equation}\label{tal1}
\begin{aligned}
\frac{\beta}{\Delta t} \int_{a^{n+1}}^{b^{n+1}} \frac{|\tilde{h}^{n+1} -h^{n*}|^2}{ \sqrt{1+ (\pt_x \tilde{h}^{n+1})^2}} \ud x &+ \int_{a^{n+1}}^{b^{n+1}} \bbs{ \sqrt{1+ (\pt_x \tilde{h}^{n+1})^2}- \sqrt{1+ (\pt_x h^{n*})^2} } \ud x  =0.
\end{aligned}
\end{equation}

We now  prove the following claim
\begin{equation}\label{clm}
\begin{aligned}
 \int_{a^{n+1}}^{b^{n+1}} \sqrt{1+ (\pt_x h^{n*})^2}  \ud x -  \int_{a^{n}}^{b^{n}}  \sqrt{1+ (\pt_x h^{n})^2} \ud x  \leq  4\Delta t.
\end{aligned}
\end{equation}
\begin{proof}[Proof of the claim \eqref{clm}]
Frist, from \eqref{stable_ab} and $|\sigma|<1$, we know
\begin{equation}\label{4dt}
\left| (b^{n+1}- a^{n+1}) - (b^n -a^n)  \right| \leq 4 \Delta t. 
\end{equation}

Second, by changing variables \eqref{changing1}, we have
\begin{equation}
\begin{aligned}
 &\int_{a^{n+1}}^{b^{n+1}} \sqrt{1+ (\pt_x h^{n*})^2}  \ud x -  \int_{a^{n}}^{b^{n}}  \sqrt{1+ (\pt_x h^{n})^2} \ud x  \\
 =&\int_0^1  \sqrt{(b^{n+1}-a^{n+1})^2+ \bbs{\pt_Z U(Z)}^2}   \ud Z -  \int_0^1 \sqrt{(b^n-a^n)^2+ (\pt_Z U(Z))^2}  \ud Z
\\
=& \int_0^1  \frac{(b^{n+1}-a^{n+1})^2 - (b^n-a^n)^2  }{\sqrt{(b^{n+1}-a^{n+1})^2+ \bbs{\pt_Z U(Z)}^2 } + \sqrt{(b^n-a^n)^2+ (\pt_Z U(Z))^2} }  \ud Z
 \end{aligned}
\end{equation} 
Therefore, from \eqref{4dt}, we know
\begin{equation}
\begin{aligned}
 &\left|\int_{a^{n+1}}^{b^{n+1}} \sqrt{1+ (\pt_x h^{n*})^2}  \ud x -  \int_{a^{n}}^{b^{n}}  \sqrt{1+ (\pt_x h^{n})^2} \ud x \right|  \\
 \leq &  \frac{  \left|(b^{n+1}-a^{n+1})^2 - (b^n-a^n)^2 \right| }{\sqrt{(b^{n+1}-a^{n+1})^2} + \sqrt{(b^n-a^n)^2}}    \leq  4\Delta t.
 \end{aligned}
\end{equation}
\end{proof}

From this claim, \eqref{tal1} becomes
\begin{equation}\label{tal2}
\begin{aligned}
\frac{\beta}{\Delta t} \int_{a^{n+1}}^{b^{n+1}} \frac{|\tilde{h}^{n+1} -h^{n*}|^2}{ \sqrt{1+ (\pt_x \tilde{h}^{n+1})^2} } \ud x + \int_{a^{n+1}}^{b^{n+1}}  \sqrt{1+ (\pt_x \tilde{h}^{n+1})^2} \ud x- \int_{a^n}^{b^n} \sqrt{1+ (\pt_x h^{n})^2}  \ud x  \leq 4\Delta t.
\end{aligned}
\end{equation}

Third, we give the estimate for $h^n$.

We prove by induction. For $n=0$, from \eqref{tal2} and the Dirichlet boundary condition
we know $\tilde{h}^{1} \in W^{1,1}(a^1, b^1)$.  Thus
\begin{equation}
\pt_x h^{1} = \pt_x (\tilde{h}^1_{+}) = \text{sgn} (\tilde{h}^1) \pt_x \tilde{h}^1 \in L^1(a^1, b^1).
\end{equation}
This implies 
\begin{equation}
\int_{a^1}^{b^1} \sqrt{1+|\pt_x h^1|^2} \ud x \leq  \int_{a^1}^{b^1} \sqrt{1+|\pt_x \tilde{h}^1|^2} \ud x.
\end{equation}
Then by induction, \eqref{tal2} becomes
\begin{equation}\label{tal3}
\begin{aligned}
\frac{\beta}{\Delta t} \int_{a^{n+1}}^{b^{n+1}} \frac{|\tilde{h}^{n+1} -h^{n*}|^2}{ \sqrt{1+ (\pt_x \tilde{h}^{n+1})^2} } \ud x + \int_{a^{n+1}}^{b^{n+1}}  \sqrt{1+ (\pt_x{h}^{n+1})^2} \ud x- \int_{a^n}^{b^n} \sqrt{1+ (\pt_x h^{n})^2}  \ud x  \leq 4\Delta t.
\end{aligned}
\end{equation}
By telescoping, we obtain
\begin{equation}
 \int_{a^{n+1}}^{b^{n+1}}  \sqrt{1+ (\pt_x h^{n+1})^2} \ud x \leq 4T + \int_{a^0}^{b^0} \sqrt{1+ (\pt_x h^{0})^2}  \ud x
\end{equation}
and thus we conclude \eqref{stable_cap}.

\end{proof}

}

{\blue 

\subsection{Convergence analysis of the projection method with pinning contact lines}\label{sec_conv}

In this section, we give  convergence analysis for the projection method under the pinning contact line assumption.
On the one hand, the pinning (or sticking) effect  is an important observed phenomenon in most droplet wetting applications \cite{dussan1985ability, schaffer1998dynamics, de2013capillarity}.   On the other hand, the convergence analysis of the projection method for the original moving contact line problem is very challenging. In the following subsections, we first represent the PVI solution with pinning contact line as a nonlinear semigroup solution,  then introduce the associated projection method and its resolvent representation, and finally give the convergence analysis of the projection method using a nonlinear version of  Trotter-Kato's product formula.

\subsubsection{Pinning contact line and the associated PVI}
In order to work in a Hilbert space $X:=L^2(D)$, we clarify the following two assumptions on the Rayleigh dissipation functional $Q$.
Recall \eqref{QQ22}.  First, we replace the dissipation due to the motion of the capillary surface as $\zeta\int_{D(t)} |v|^2 \ud x \ud y$ so 
\begin{equation}
\tilde{Q}:=\ssr \int_{\Gamma(t)} |v_{\cl}|^2 \ud s  + \zeta\int_{D(t)} |v|^2 \ud x \ud y.
\end{equation} 
Second, we assume the friction coefficient $\ssr\to +\8$, which leads to the pinning boundary condition $v_{\cl}=0$.
Then the PVI \eqref{PVI_new} becomes  the following parabolic obstacle problem in a fixed domain $D$.  Denote $K:=\{u\in L^2(D); u\geq 0\}$.
\begin{equation}\label{PVI_class}
\begin{aligned}
& \text{ Find } u(t) \in K \cap C_0(D) , \,\, \lambda(t) \in \mathbb{R} \,\, \text{ such that }\\
& \zeta\int_{D} \pt_t u (\tilde{ v}-  u) \ud x \ud y \geq \int_{D} F (\tilde{v}- u ) \ud x \ud y, \quad  \forall  \tilde{v}\in K\cap C_0(D),\\
& \int_{D} u \ud x\ud y =V,
\end{aligned}
\end{equation}
where $F = \gamma_{\nlg}\nabla \cdot \bbs{\frac{\nabla u}{\sqrt{1+|\nabla u|^2}}}  -\rho g u + \lambda .$

\subsubsection{The sum of two maximal monotone operators and the nonlinear semigroup solution}\label{sec_semi}
Choose any fixed capillary profile $u^*\in C_0^2(D)$ satisfying $\int_{D} u^* \ud x \ud y=V, \, u^*\geq 0$ as a reference. We introduce  a Hilbert space
$$X_1:= \{v\in L^2(D); \int_D v \ud x \ud y = 0\}.$$
Define a functional $E$ from $X_1$ to $[0, +\8]$
\begin{equation}
E(v):= \F(v+u^*)=\gamma_{\nlg} \int_D \sqrt{1+|\nabla (v+u^*)|^2} \ud x \ud y + \rho g \int_D \frac{(v+u^*)^2}{2} \ud x \ud y.
\end{equation}
First, it is easy to verify that $E$ is a proper, convex, lower semi-continuous functional on $X_1$.  Second, in $X_1$,  we compute the G\^ateaux derivative of $E$: for any $\tilde{v}\in X_1$,
\begin{equation}
\frac{\ud }{\ud \eps} \Big|_{\eps = 0}E(v+\eps \tilde{v}) = \la -\gamma_{\nlg}\nabla \cdot \bbs{\frac{\nabla (v+u^*)}{\sqrt{1+|\nabla (v+u^*)|^2}}}  +\rho g (v+u^*) - \lambda, \tilde{v} \ra
\end{equation}
for any constant $\lambda\in \mathbb{R}$. Indeed, any constants   are zero element in $(X_1)'$.
Thus we know the subdifferential of $E$ in $X_1$ is single-valued and agrees with the  G\^ateaux derivative; denoted as 
\begin{equation}
\nabla E (v) :=-\gamma_{\nlg}\nabla \cdot \bbs{\frac{\nabla (v+u^*)}{\sqrt{1+|\nabla (v+u^*)|^2}}}  +\rho g (v+u^*) - \lambda.
\end{equation}

Since $E$ is a proper, convex, lower semi-continuous functional on $X_1$,  we know $\nabla E$ is a maximal monotone operator which generates a nonlinear $C_0$-semigroup; symbolically denoted as  $S_1(t):= e^{-t \nabla E}$. The semigroup solution $u(t)=v(t)+u^*$ satisfies
the following governing equations
\begin{equation}
\begin{aligned}
\zeta \pt_t u = \gamma_{\nlg}\nabla \cdot \bbs{\frac{\nabla u}{\sqrt{1+|\nabla u|^2}}}  -\rho g u + \lambda  \quad \text{ in }D,\\
u = 0 \quad  \text{ on }\pt D,\\
\int_{D} u \ud x \ud y = V.
\end{aligned}
\end{equation}
Notice the left hand side in the first equation above is vertical velocity instead of normal velocity of the capillary surface due to the special choice of $\tilde{Q}$.

For the obstacle problem \eqref{PVI_class}, we need to introduce an indicator functional $I_K$ for the convex subset
\begin{equation}
K:=\{v\in X_1; u=u^* +v \geq 0\}\subset X_1,
\end{equation}
which is indeed a convex cone.
Denote $A:=\nabla E$, $B:=\pt I_K$. Since $A+B= \nabla E+ \pt I_K$ is a maximal monotone operator in $X_1$, $A+B$ generates a strongly continuous semigroup on $[0,+\8)$ of  contractions  \cite{komura1967nonlinear, brezis1973operateurs}, symbolically denoted as $S(t):= e^{-t(\nabla E+ \pt I_K)}$.
For any $u_0= v_0+u^*, \, v_0\in K$, the unique mild solution to \eqref{PVI_class} is given by  \cite{brezis1973operateurs}
\begin{equation}\label{mild-solu}
u(t) = e^{-t (\nabla E+ \pt I_K)} u_0:=u^*+ e^{-t (\nabla E+ \pt I_K)} v_0 .
\end{equation}

\subsubsection{Projection method and resolvent representation}
To numerically solve the obstacle problem \eqref{PVI_class} generated by $A+B$, the projection method (the splitting method) is a natural and efficient method. 
Its convergence will be shown below by using a nonlinear version of  Trotter-Kato's product formula \cite{Kato_Masuda_1978}.

We first present the  projection method for \eqref{PVI_class} in terms of the abstract operators  $\nabla E$ and $\pt I_K$ as follows. For $\tau:= \frac{T}{n}$, we use the projection method to construct an approximation $u^k$ of  $u(t^k)$ with $t^k=k \tau, \, k=0,1, \cdots, n$ below.
\begin{enumerate}[Step (i)]
\item For $u^k=v^k+u^*$, $v^k\in K$, find
\begin{equation}
\tilde{u}^{k+1} = u^*+ \argmin_{v\in X_1} E(v)+ \frac{\|v-v^k\|^2}{2\tau};
\end{equation}
which is symbolically given by 
\begin{equation}
\tilde{u}^{k+1} = (I+\tau\nabla E)^{-1} u^k;
\end{equation}
\item Find $u^{k+1}$ and  $\lambda$ satisfying
\begin{equation}\label{proj340}
\left\{
\begin{array}{cc}
u^{k+1} = \max\{ \tilde{u}^{k+1} + \lambda ,\,  0\},\\
\int_{D} u^{k+1} \ud x \ud y = V.
\end{array}
\right.
\end{equation}
 
\end{enumerate}
Indeed, we have the following lemma characterizing the projection operator in Step (ii).
\begin{lem}\label{lem_proj}
Given $\tilde{u}^{k+1}$ and $\tilde{v}^{k+1}=\tilde{u}^{k+1}-u^*$,  the following problems are equivalent:
\begin{enumerate}[(i)]
\item Find $ u^{k+1}=v^{k+1}+u^*$ with $v^{k+1}$ satisfying
\begin{equation}\label{pvi343}
\left\{
\begin{array}{cc}
v^{k+1}\in K,\\
\la\tilde{v}^{k+1} - v^{k+1} + \lambda , \phi - v^{k+1} \ra \leq 0 \quad \forall \phi \in K;
\end{array}
\right.
\end{equation}
\item Find $ u^{k+1}=v^{k+1}+u^*$ with $v^{k+1}$ satisfying
\begin{equation}\label{res341}
\tilde{v}^{k+1} - v^{k+1} \in \tau \pt I_K (v^{k+1}), \quad \forall \tau>0;
\end{equation}
\item Find $ u^{k+1}=v^{k+1}+u^*$ with $v^{k+1}$ satisfying $v^{k+1}=\proj_{K}(\tilde{u}^{k+1}-u^*)$;
\item Find $u^{k+1}$ and $\lambda$ satisfying \eqref{proj340};
\end{enumerate} 
\end{lem}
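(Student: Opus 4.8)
The plan is to prove the chain of equivalences $(i) \Leftrightarrow (ii) \Leftrightarrow (iii)$ using standard convex analysis, and then to establish $(i) \Leftrightarrow (iv)$ by directly analyzing the pointwise structure of the variational inequality. Throughout, all inner products are taken in $X_1 = \{v \in L^2(D): \int_D v = 0\}$, and the key subtlety is that the constraint $\int_D v = 0$ makes constants the zero element of $(X_1)'$, so the Lagrange multiplier $\lambda$ is only determined up to the role of enforcing the volume constraint.

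First I would record the definition of $\pt I_K$: for $v \in K$, a vector $\xi \in \pt I_K(v)$ if and only if $\la \xi, \phi - v\ra \leq 0$ for all $\phi \in K$. With this, the equivalence $(i) \Leftrightarrow (ii)$ is immediate upon rewriting \eqref{pvi343} as $\la (\tilde v^{k+1} - v^{k+1}) + \lambda, \phi - v^{k+1}\ra \leq 0$ and dividing by $\tau$; since $\lambda$ is a constant it is the zero element of $(X_1)'$, so \eqref{pvi343} says precisely $\tfrac{1}{\tau}(\tilde v^{k+1} - v^{k+1}) \in \pt I_K(v^{k+1})$, which is \eqref{res341}. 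For $(ii) \Leftrightarrow (iii)$: \eqref{res341} reads $\tilde v^{k+1} \in (I + \tau \pt I_K)(v^{k+1})$, i.e. $v^{k+1} = (I + \tau \pt I_K)^{-1}\tilde v^{k+1}$; since $\pt I_K$ is maximal monotone and $I_K$ is the indicator of the closed convex cone $K$, the resolvent $(I + \tau \pt I_K)^{-1}$ coincides with the metric projection $\proj_K$ onto $K$ (independently of $\tau > 0$), giving $v^{k+1} = \proj_K(\tilde v^{k+1}) = \proj_K(\tilde u^{k+1} - u^*)$, which is $(iii)$. This is all standard and should be stated crisply rather than belabored.

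The substantive step is $(i) \Leftrightarrow (iv)$, relating the abstract VI to the explicit formula $u^{k+1} = \max\{\tilde u^{k+1} + \lambda, 0\}$ with $\int_D u^{k+1} = V$. The approach: given a solution of \eqref{proj340}, I would check it solves \eqref{pvi343}. Set $v^{k+1} = u^{k+1} - u^* = \max\{\tilde u^{k+1} + \lambda, 0\} - u^*$; since $u^{k+1} \geq 0$ we have $v^{k+1} \in K$ and $\int_D v^{k+1} = V - V = 0$ so $v^{k+1} \in X_1$. Now for any $\phi \in K$ (so $\phi + u^* \geq 0$), write $\tilde v^{k+1} - v^{k+1} + \lambda = (\tilde u^{k+1} + \lambda) - u^{k+1}$ and split $D$ into the contact set $\{u^{k+1} = 0\}$, where $\tilde u^{k+1} + \lambda \leq 0$ so $(\tilde u^{k+1}+\lambda) - u^{k+1} \leq 0$ while $(\phi + u^*) - u^{k+1} = \phi + u^* \geq 0$, and its complement $\{u^{k+1} > 0\}$, where $\tilde u^{k+1} + \lambda = u^{k+1}$ so the integrand vanishes; hence $\la (\tilde u^{k+1}+\lambda) - u^{k+1}, (\phi + u^*) - u^{k+1}\ra = \int_D [(\tilde u^{k+1}+\lambda)-u^{k+1}][(\phi+u^*)-u^{k+1}] \leq 0$, which is exactly \eqref{pvi343}. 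Conversely, given a solution of \eqref{pvi343}, one recovers \eqref{proj340} by the usual obstacle-problem argument: testing with $\phi = v^{k+1} + \psi$ for $\psi \geq 0$, $\psi \in C_c$, forces $\tilde u^{k+1} + \lambda - u^{k+1} \leq 0$ a.e.; testing with $\phi = v^{k+1} \pm \psi$ on the set where $u^{k+1} > 0$ forces $\tilde u^{k+1} + \lambda = u^{k+1}$ there; together these give $u^{k+1} = \max\{\tilde u^{k+1} + \lambda, 0\}$, and $\int_D v^{k+1} = 0$ is the volume constraint.

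The main obstacle I anticipate is bookkeeping around the multiplier $\lambda$ and the space $X_1$: one must be careful that $\lambda$ in \eqref{proj340} is a genuine scalar chosen so that $\int_D u^{k+1} = V$ (its existence and uniqueness follows from monotonicity of $\lambda \mapsto \int_D \max\{\tilde u^{k+1} + \lambda, 0\}$, which is continuous, nondecreasing, ranges over $[0,\infty)$, hence hits $V$ exactly once), whereas in \eqref{pvi343} $\lambda$ appears as an additive constant that is invisible as an element of $(X_1)'$ — so the translation between the two formulations is really the statement that $\pt I_K$ computed in $X_1$ versus in $L^2(D)$ differ exactly by constants. I would make this precise once, early on, and then the four equivalences fall out. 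A secondary minor point is the regularity $u^{k+1} \in C_0(D)$ claimed implicitly in \eqref{PVI_class}: for the lemma itself, which concerns only the $L^2$/$X_1$ projection, I would not need it, and I would note that continuity of $u^{k+1}$ is inherited from $\tilde u^{k+1} \in C_0^2$-type regularity produced in Step (i) via the elliptic solve, so $\max\{\tilde u^{k+1} + \lambda, 0\}$ is continuous and vanishes on $\pt D$.
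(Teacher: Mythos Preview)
Your proposal is correct and covers the same equivalences, but the route differs from the paper's in two places worth noting.

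For the link to $(iii)$, the paper does not go through $(ii)$ and the resolvent identity $(I+\tau\,\pt I_K)^{-1}=\proj_K$; instead it proves $(i)\Leftrightarrow(iii)$ directly by introducing the Lagrangian $L(u,\lambda)=\tfrac12\|u-\tilde u^{k+1}\|^2-\lambda(\int_D u-V)$ on the unconstrained cone $\tilde K=\{u\in L^2(D):u\ge 0\}$ and reading off the variational inequality for the saddle point. Your route via the resolvent is cleaner and avoids that detour, at the cost of invoking a standard fact rather than deriving it; either way is fine.

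For $(i)\Leftrightarrow(iv)$, the paper first shows existence and uniqueness of $\lambda$ in \eqref{proj340} by the same monotonicity argument you sketch, and then simply cites Kinderlehrer--Stampacchia for the equivalence with the variational inequality. Your explicit pointwise argument (splitting into the contact set and its complement in one direction, and testing with $\phi=v^{k+1}\pm\psi$ in the other) is more self-contained and is exactly what that citation unpacks to. One caution: in your converse direction you test with $\phi=v^{k+1}+\psi$, $\psi\ge 0$; for this to be admissible without forcing $\psi\equiv 0$ you must read $(i)$ with $\phi$ ranging over $\{\phi:\phi+u^*\ge 0\}$ in $L^2(D)$ rather than over $K\subset X_1$---this is precisely the ``translation between $\pt I_K$ computed in $X_1$ versus in $L^2$ differs by constants'' point you flag, and it is also how the paper implicitly reads $(i)$ when it identifies it with \eqref{tm_347}. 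Make that identification explicit before you run the test-function argument, and the proof goes through.
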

\begin{proof}

First, we prove (i) is equivalent to (ii).

The subdifferential of $I_K$ at $v\in K$ in $X_1$ is given by
\begin{equation}
f\in \tau \pt I_K(v) \llra 0 \geq \la f+\lambda, \phi - v \ra \quad \forall \phi \in K.
\end{equation}
where $\lambda$ is any constants.

Thus $\pt I_K $ is convex cone and  
\begin{equation}
\begin{aligned}
& \tilde{u}^{k+1} - u^{k+1} = \tilde{v}^{k+1} - v^{k+1} \in \tau \pt I_K(v^{k+1})\\
 \llra & 
\left\{
\begin{array}{cc}
v^{k+1}\in K,\\
\la\tilde{v}^{k+1} - v^{k+1} + \lambda , \phi - v^{k+1} \ra \leq 0 \quad \forall \phi \in K.
\end{array}
\right.
\end{aligned}
\end{equation}

Second, the equivalence between (i) and (iii) can be  verified by introducing a Lagrange multiplier $\lambda$ to enforce the volume constraint.
Define 
$$\tilde{K}:=\{u\in L^2(D); u\geq 0\}, \quad  L(u,\lambda):=\frac12\|u-\tilde{u}^{k+1}\|^2 - \lambda (\int_D u \ud x \ud y - V).$$
Thus the projection is
\begin{equation}
u^{k+1}=\proj_K(\tilde{v}^{k+1}) + u^*,      \quad (u^{k+1}, \lambda^*)=   \arg\min_{u\in \tilde{K}, \lambda\in \mathbb{R}} L(u, \lambda).
\end{equation}

Thus $(u^{k+1}, \lambda^*)$ satisfies
\begin{equation}
\left\{
\begin{array}{cc}
&\int_{D} u^{k+1} \ud x \ud y = V,\\
&L(u^{k+1}, \lambda^*) \leq L(u^{k+1} + \eps(\phi-u^{k+1}) , \lambda^*), \quad  \forall0<\eps <1, \tilde{\phi} \in \tilde{K}.
\end{array}
\right.
\end{equation}
Taking $\eps\to 0^+$, we have
\begin{equation}\label{tm_347}
\left\{
\begin{array}{cc}
&\int_{D} u^{k+1} \ud x \ud y = V,\\
&\la u^{k+1}-\tilde{u}^{k+1}, \tilde{\phi} - u^{k+1} \ra - \lambda^* \int_{D} (\tilde{\phi} - u^{k+1}) \geq 0, \quad \tilde{\phi} \in \tilde{K}.
\end{array}
\right.
\end{equation}
which is exactly (i).

Third, to prove the equivalence between (iv) and (i), we only need to prove the equivalence between \eqref{proj340} and \eqref{tm_347}  since $u^*$ is the reference state satisfying $\int_D u^* \ud x \ud y=V$. 
Define 
$$f(\lambda):=\int_D \max\{\tilde{u}^{k+1} + \lambda, 0\} \ud x \ud y - V,$$
which is an increasing function with respect to $\lambda$.
It is easy to verify
\begin{align*}
f(0) \geq \int_D \tilde{u}^{k+1} \ud x \ud y - V = 0
\end{align*}
while for $\lambda_{m}=-\max(\tilde{u}^{k+1})\leq 0$,
\begin{align*}
f(\lambda_m) = -V \leq 0.
\end{align*}
Thus there exists a unique $\lambda^*$ such that
\begin{equation}
\left\{
\begin{array}{cc}
u^{k+1} = \max\{ \tilde{u}^{k+1} + \lambda^* , 0\},\\
\int_{D} u^{k+1} \ud x \ud y = V.
\end{array}
\right.
\end{equation}
Then the equivalence of this with \eqref{tm_347} is directly concluded \cite[p. 27]{Kind00}.

\end{proof}

Rewrite  \eqref{res341}  as 
\begin{equation}
v^{k+1} = (I+\tau \pt I_K)^{-1} (\tilde{v}^{k+1}),
\end{equation}
and symbolically
\begin{equation}
u^{k+1} = (I+\tau \pt I_K)^{-1} \tilde{u}^{k+1}:=u^* + (I+\tau \pt I_K)^{-1} (\tilde{v}^{k+1}).
\end{equation}
In summary, we have
\begin{equation}
u^{k+1}= (I+\tau \pt I_K)^{-1} (I+\tau \nabla E)^{-1} u^k.
\end{equation}
Recall $A=\nabla E$, $B=\pt I_K$.
The corresponding resolvent operators of $A$ and $B$ are denoted as $J_\tau^A := (I+\tau \nabla E)^{-1} $ and $J_\tau^B := (I+\tau \pt I_K)^{-1}$ respectively.
The projection method above can be recast as
\begin{equation}\label{kato1}
u^{k+1}=J_\tau^B J_\tau^A u^k.
\end{equation}
Next, to obtain an approximated solution $\bar{u}_\tau(t)$ at any $t\in[0,T]$ for any time step $\tau>0$, we use the piecewise constant interpolation from $u^k$  such that
\begin{equation}
\bar{u}_\tau(t)\equiv u^k, \quad t\in[k \tau, (k+1) \tau),
\end{equation}
which is equivalent to
\begin{equation}\label{scheme2}
\bar{u}_\tau (t):= (J_{\tau}^B J_{\tau}^A)^{[\frac{t}{\tau}]} u_0, \quad t\in[0,T].
\end{equation}

\subsubsection{Convergence theorem}
With all the preparations above, we apply a nonlinear version of  Trotter-Kato's product formula \cite{Kato_Masuda_1978} to prove the convergence for solving PVI \eqref{PVI_class} using the projection method. 
\begin{thm}[Convergence of the projection method]\label{thm1}
Let $t\in[0,T]$ and $u(t)$ given in \eqref{mild-solu} be the mild solution of \eqref{PVI_class}. Let $\bar{u}_\tau(t)$ given in \eqref{scheme2} be the numeric solution obtained from the projection method with time step $\tau$. Then we have the convergence
\begin{equation}
\lim_{\tau\to 0} \sup_{t\in[0,T]} \|\bar{u}_\tau(t) - u(t)\|=0.
\end{equation} 
\end{thm}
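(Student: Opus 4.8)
\textbf{Proof proposal for Theorem \ref{thm1}.}

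The plan is to invoke the nonlinear Trotter--Kato product formula of \cite{Kato_Masuda_1978}, so the bulk of the work is in verifying its hypotheses for the pair $A=\nabla E$ and $B=\pt I_K$ acting on the Hilbert space $X_1$. The abstract statement one wants is: if $A$ and $B$ are maximal monotone operators on a Hilbert space with $A+B$ maximal monotone, then for every $u_0\in\overline{D(A+B)}$ one has $(J^B_\tau J^A_\tau)^{[t/\tau]}u_0\to e^{-t(A+B)}u_0$ uniformly on compact time intervals. Thus the first step is to collect the three structural facts already established in the excerpt: $E$ is proper, convex, lower semicontinuous on $X_1$ with single-valued subdifferential $\nabla E$, so $A$ is maximal monotone; $I_K$ is the indicator of a nonempty closed convex cone, so $B=\pt I_K$ is maximal monotone with $J^B_\tau=\proj_K$ (independent of $\tau$) by Lemma \ref{lem_proj}; and $A+B=\nabla E+\pt I_K=\pt(E+I_K)$ is the subdifferential of a proper convex l.s.c.\ functional (here one uses that $D(E)\cap\mathrm{int}\,D(I_K)$, or at least a qualification condition such as $u^*$ or any smooth strictly positive profile lying in the interior of $K$, is nonempty, so the sum rule for subdifferentials applies), hence maximal monotone, and it generates the contraction semigroup $S(t)$ whose orbit is the mild solution \eqref{mild-solu}.

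Second, I would record the compatibility of domains needed to apply the product formula to the given initial datum: $u_0=v_0+u^*$ with $v_0\in K$, and $K\subset\overline{D(E)}$ since $E$ is finite on a dense subset of $K$ (e.g.\ smooth functions in $K$), so $v_0\in\overline{D(A+B)}=\overline{D(\pt(E+I_K))}$ and the formula applies at $u_0$. One should also note that the iteration \eqref{kato1} is exactly the product-formula iteration written in resolvent form, $u^{k+1}=J^B_\tau J^A_\tau u^k$, and that $\bar u_\tau(t)$ in \eqref{scheme2} is the piecewise-constant interpolant, which is the object whose convergence Trotter--Kato controls. Since $J^B_\tau=\proj_K$ does not depend on $\tau$ we are in a mildly simplified situation (one of the two semigroups is replaced by a fixed nonexpansive projection), which is covered by the same theorem.

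Third, with the hypotheses in place the conclusion $\lim_{\tau\to0}\sup_{t\in[0,T]}\|\bar u_\tau(t)-e^{-t(A+B)}v_0\|=0$ is immediate, and translating back by the fixed reference shift $u=v+u^*$ gives $\sup_{t\in[0,T]}\|\bar u_\tau(t)-u(t)\|\to0$, which is the claim. I expect the main obstacle to be precisely the qualification/sum-rule point in the first step: one must justify that $A+B$ is maximal monotone (equivalently that $\pt(E+I_K)=\pt E+\pt I_K$) and that the cited Trotter--Kato theorem applies in the exact generality needed here — in particular for the unbounded, non-coercive operator $\nabla E$ coming from the minimal-surface-type energy and for initial data only in the closure of the domain rather than the domain itself. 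A secondary technical point worth a sentence is that $\nabla E$ as defined is the subdifferential computed in $X_1$ (constants are the zero element of $X_1'$), so the Lagrange multiplier $\lambda$ is automatically absorbed and the resolvent $J^A_\tau$ coincides with the minimization problem in Step (i); one should check the resolvent of $A$ in $X_1$ is well defined and single-valued, which follows from maximal monotonicity of $A$.
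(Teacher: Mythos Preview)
Your proposal is correct and follows essentially the same route as the paper: both reduce to the Kato--Masuda nonlinear Trotter--Kato product formula applied to the resolvents of $\nabla E$ and $\pt I_K$, with the paper citing \cite[Example 2.3]{Kato_Masuda_1978} to verify that these resolvents are ``nice'' families for the respective convex functionals. The only point the paper makes explicit that you gloss over is the last interpolation step: Kato--Masuda gives $(J^B_{t/n}J^A_{t/n})^n u_0\to u(t)$ uniformly, and the paper then argues separately that $\bar u_\tau(t)=(J^B_\tau J^A_\tau)^{[t/\tau]}u_0$ differs from this by $\|u(\tau[t/\tau])-u(t)\|$, which vanishes by strong continuity of the semigroup.
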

\begin{proof}

First, since $A+B= \pt(E+I_K)$ is a maximal monotone operator in $X$, $A+B$ generates a strongly continuous semigroup on $[0,+\8)$ of  contractions, denoted symbolically as $S(t):= e^{-t\pt(E+I_K)}$.
For any $u_0\in K$, the mild solution to \eqref{maineq} is given by 
\begin{equation}
u(t) = e^{-t\pt(E+I_K)} u_0.
\end{equation}

Second, recall the resolvent of $A$ and $B$ are $J_\tau^A = (I+\tau \nabla E)^{-1} $ and $J_\tau^B = (I+\tau \pt I_K)^{-1}$ respectively. We use Trotter-Kato's product formula \cite{Kato_Masuda_1978} to prove
\begin{equation}\label{con1}
u^n(t):= \bbs{J_{\frac{t}{n}}^B J_{\frac{t}{n}}^A }^n u_0 \to u(t), \quad \text{ as }n\to +\8,\quad \text{ uniformly for } t\geq 0. 
\end{equation}
To see this, in \cite{Kato_Masuda_1978} we take $U_1:=J_{\frac{t}{n}}^A$, $U_2:=J_{\frac{t}{n}}^B$. Then by \cite[Example 2.3]{Kato_Masuda_1978}, we know $U_1$ are a nice $E$-family  with index $\gamma\geq 2$ and  $U_2$ are a nice $I_K$-family  with index $\gamma\geq 2$. Hence the condition (i) in \cite[Theorem]{Kato_Masuda_1978} holds, which gives the claim  \eqref{con1}.

Finally, for the projection scheme for $t\in[0,T]$ with time step $\tau=\frac{T}{n}$, the piecewise constant interpolation in $[k\tau, (k+1)\tau)$ is given by
\begin{equation}
\bar{u}_\tau(t)= (J_{\tau}^B J_{\tau}^A)^{[\frac{t}{\tau}]} u_0, \quad t\in[0,T],
\end{equation}
where $[a]$ is the integer part of real number $a$. Since $\tau [\frac{t}{\tau}] \to t$ as $\tau\to 0$, we know \begin{equation}
u(\tau [\frac{t}{\tau}]) \to u(t)
\end{equation}
due to continuous semigroup property. Therefore we conclude 
\begin{equation}
\|\bar{u}_\tau(t) - u(t)\|\leq\|(J_{\tau}^B J_{\tau}^A)^{[\frac{t}{\tau}]} u_0-u(\tau [\frac{t}{\tau}])\|+ \|u(\tau [\frac{t}{\tau}]) - u(t)\| \to 0
\end{equation}
as $\tau\to 0$ uniformly in $t\in[0,T]$.

\end{proof}

}

\section{Simulations for merging and splitting of droplets}\label{sec_simu}
In this section, we first give an  accuracy check for the numerical scheme by constructing a projected  triple Gaussian capillary surface  and then demonstrate two typical examples using the projection scheme proposed in Section \ref{sec_scheme}. The first example is  splitting of one big droplet into two droplets when placed on an inclined groove-textured substrate. The second example is   merging of two droplets in a Utah teapot, which is compared to  independent dynamics of two droplets in the teapot separately.

\subsection{Accuracy check with a projected  triple Gaussian profile } In this subsection, to check the order of accuracy of the projection method, we construct a special example by a projected  triple Gaussian function. We point out we only check the 1st order  accuracy of the projection method for solving PVI \eqref{PVI_class}, i.e., Steps 1-4 in Section \ref{sec_1st}. In other words, when checking the 1st order accuracy of the projection method, we always regard the projected profile as a two-phase interface without detecting the  splitting point (Step 5 in Section \ref{sec_1st}).

We use   the initial endpoint $b_0= 1.3$ and a projected  triple Gaussian as initial capillary surface (green line in Figure \ref{fig_triple}) 
\begin{equation}\label{u0}
 u_0(x) = \max\{2\bbs{e^{-(x+1)^2}+e^{-16x^2}+e^{-(x-1)^2} - e^{-(b_0+1)^2}-e^{-16b_0^2}-e^{-(b_0-1)^2}},\,\, 0\} \quad  \text{ for }x\in[-b_0, b_0].
\end{equation}
The physical parameters in \eqref{tm313} for the first order scheme in Section \ref{sec_1st}  are 
\begin{equation}
 \kappa = 20 , \quad  \beta = 2, \quad  \sigma= - \cos\frac{\pi}{3}=-\frac12, \quad w(x)\equiv 0,
\end{equation}
where $\theta_Y=\frac{\pi}{3}$ is the Young's angle.
Choose a final time   $T=0.2$. We numerically compute an exact solution  $b(T)=1.238369607448268, \, \theta_{\cl}(T)=0.493880485215342$ with uniform time steps $\Delta t = \frac{T}{M},\, M=1280$ and uniform  $N=8*M$ moving  grid points  in $[a(t), b(t)]$.

We show below the accuracy check  for the first order scheme in Section \ref{sec_1st}  in Table \ref{table_1st}.    We use the same physical parameters and initial data   in the first  order schemes. For several $M_n$ listed in the tables, we take time step as $\Delta t= \frac{T}{M_n}$ and  moving grid size $\Delta x= \frac{b(t)-a(t)}{N_n}$ with $N_n=8M_n$. The absolute error $e_n =|b^{M_n}-b(T)|$ (resp. $e_n=|\theta_{\cl}^{M_n} - \theta_{\cl}(T)|$) between numeric solutions and the numerically computed exact contact point $b(T)$ (resp. exact contact angle $\theta_{\cl}(T)$) are listed in the second column (resp. sixth column) of Table \ref{table_1st}. The maximal norm error $e_n =\|u^{M_n}-u(\cdot,T)\|_{\ell^\8}$ between numeric solutions and  the numerically computed exact capillary profile $u$ is listed in the fourth column of Table \ref{table_1st}.
The corresponding order of accuracy $\alpha = \frac{\ln(e_n/e_{n+1})}{\ln(M_{n+1}/M_{n})}$ is listed in the last column of the tables. For $M=320,$ the time evolution of the capillary surface is shown in Figure \ref{fig_triple} (upper) with red lines at equal time intervals. A zoom in plot showing the capillary surface for the small bump profile at $x=0$ approaches zero at the early stage of the evolution. We also track the time history of two contact angles  upto the final time $T=0.2$; see Figure \ref{fig_triple} (lower).

\begin{table}[ht]
\caption{Accuracy check: 1st order projection scheme in Section \ref{sec_1st}  v.s numerically computed exact  solution to \eqref{tm313} using time steps $M=1280$ and spatial grid points $N=8*M$. Parameters: $T=0.2$, $\kappa=20$, $\theta_Y=\frac{\pi}{3}$,  $b_0=1.3$, time step $\Delta t= \frac{T}{M}$, $M$ listed on the table, moving grid size $\Delta x= \frac{b(t)-a(t)}{N}$,  $N=8M$. Absolute errors $e_n$ for $b$, $u$ and $\theta_{\cl}$ are computed by comparing with exact solutions.}\label{table_1st}
\begin{tabular}{|c|c|c|c|c|c|c|}
\hline 
\,  &  \multicolumn{6}{c|}{1st order scheme}  \\ 
\hline 
M & Error of $b$  & Order  &Error of $u$  & Order & Error of $\theta_{\cl}$  & Order   \\ 
\hline \hline
$20$ & $   \num{2.5961E-3}$ & \,& $\num{1.2592E-3}$ &  & $\num{6.3988E-3}$ & \\
$40$ & $  \num{1.3571E-3}$ & 0.936& $\num{6.3389E-4}$ & $0.990$ & $\num{3.0201E-3}$& $1.083$  \\
$80$ & $ \num{6.8157E-04}$ &0.994 & $ \num{3.0961E-4}$ & $1.034$ & $\num{1.4511E-3}$& $1.057$  \\
$160$ & $ \num{3.2538E-4}$ &1.067  & $ \num{1.4555E-4}$ & $1.089$ & $\num{6.7734E-4}$& $1.099$ \\
$320$ & $ \num{1.4151E-4}$ &1.201 & $ \num{6.2874E-05}$ & $1.211$ & $\num{2.8999E-4}$& $1.224$ \\
\hline
\end{tabular} 
\end{table}

\begin{figure}
\includegraphics[scale=0.4]{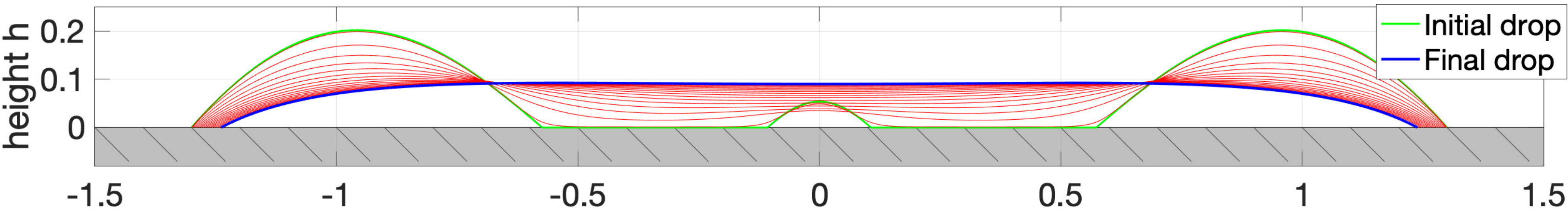}

 \includegraphics[scale=0.32]{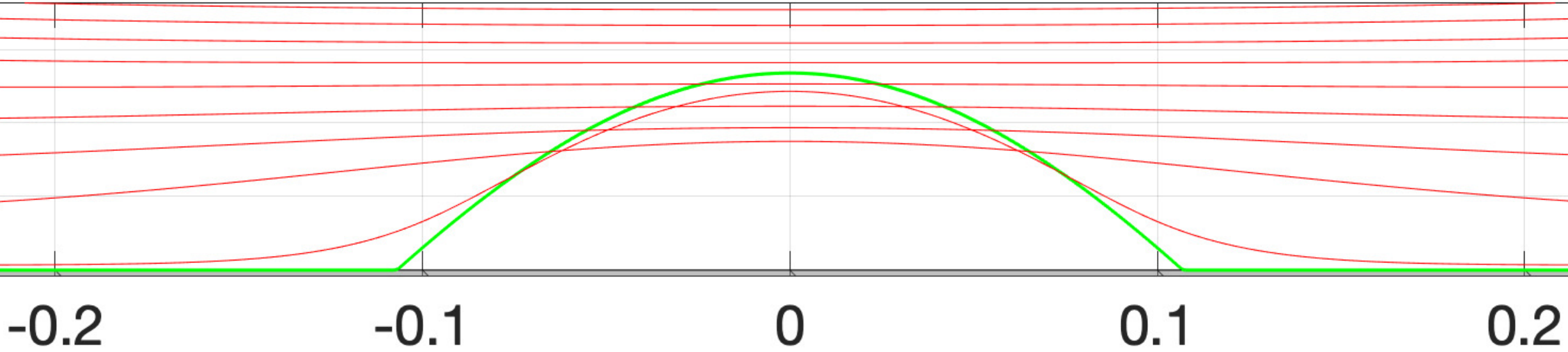}\hspace{0.01in}
 \includegraphics[scale=0.22]{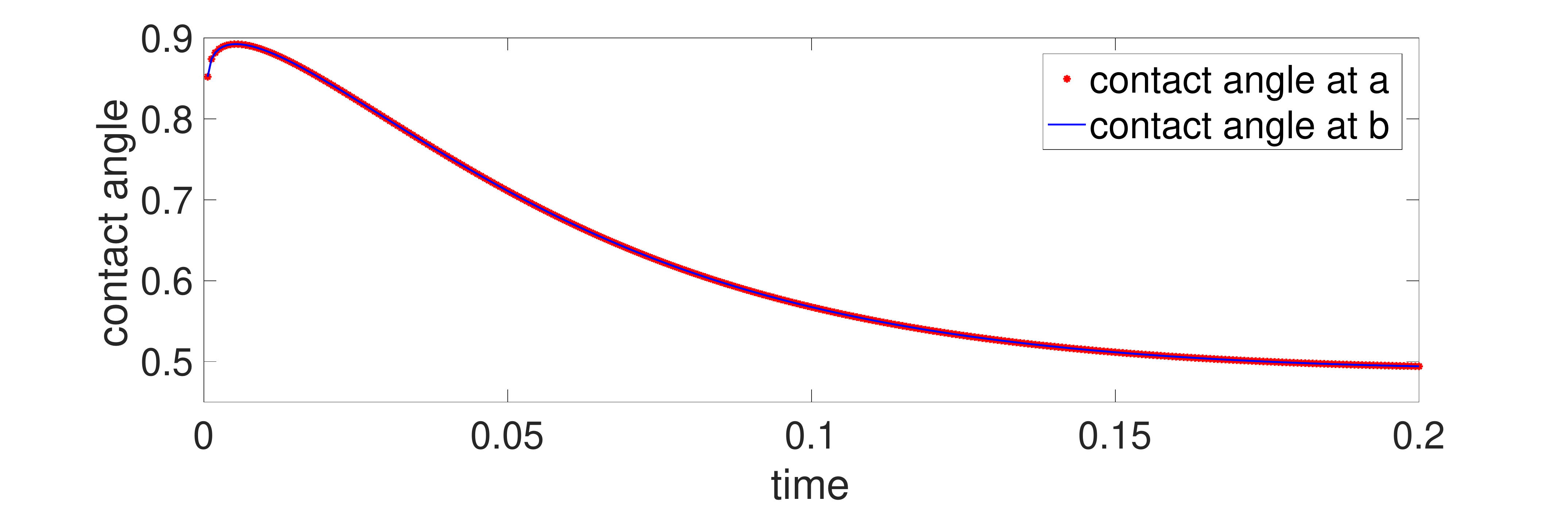}  
\caption{The time evolution of the capillary surface $u$ starting from a projected triple Gaussian function \eqref{u0}, shown in green line. (Upper) Capillary surface at equal time intervals are shown in red line and the profile at final time $T=0.2$ is shown in blue line. (Lower left) Zoom in plot near $x=0$ showing $u$ approaches zero at an early stage; (Lower right) Dynamic contact angle tracking w.r.t time.}\label{fig_triple}
\end{figure}


\subsection{Computations}
Now we use the projection scheme in Section \ref{sec_scheme} to simulate two challenging examples including the splitting and merging of droplets on different impermeable substrates.
\subsubsection{Example 1: Splitting of one droplet on an inclined  groove-textured substrate.}

We take a typical groove-textured substrate
\begin{align}
w(x)=A(\sin(kx)+\cos(2kx))^2, \, A=0.1,\, k=2.5.\label{w1_r}
\end{align}
This is an impermeable obstacle where phase transitions happen when the droplet touches the obstacle. Thus at the touching point, after one detects the phase transition,  one droplet will split into two independent droplets with their own PVI \eqref{wet-eq-r}.
To demonstrate those phenomena, 
we take the physical parameters as $\kappa=1, \, \beta=0.1$, effective inclined angle $\theta_0=0.3$ and initial droplet as 
\begin{equation}\label{h0_r1}
h(x,0)=0.1(x-a(0))(b(0)-x)+w(a(0))+\frac{[w(b(0))-w(a(0))](x-a(0))}{b(0)-a(0)}
\end{equation}
 with initial endpoints $a(0)=-2.1,\, b(0)=3.1$ as shown in Fig \ref{fig_r1r2} using a green line. The corresponding effective Bond number can be calculated as in Remark \ref{rem1} with effective inclined angle $\theta_0=0.3$, $\Bo=0.5712$. 
We take final time as $T=1$ with time step $\Delta t = 0.005$ and use $N=200$ moving grids uniformly in $(a(t), b(t))$ in the projection scheme.  With relative adhesion coefficient $\sigma=-0.52$, in Fig. \ref{fig_r1r2}, we show  the dynamics of the droplet on groove-textured surface $w(x)$ in \eqref{w1_r} at equal time intervals using thin red lines. The splitting time detected is $T_s=0.035$ with threshold $\eps=0.075$ and the two generated droplets keep moving independently until the final time $T=1$ with the final profiles shown in solid blue lines. 
\begin{figure}
 \includegraphics[scale=0.34]{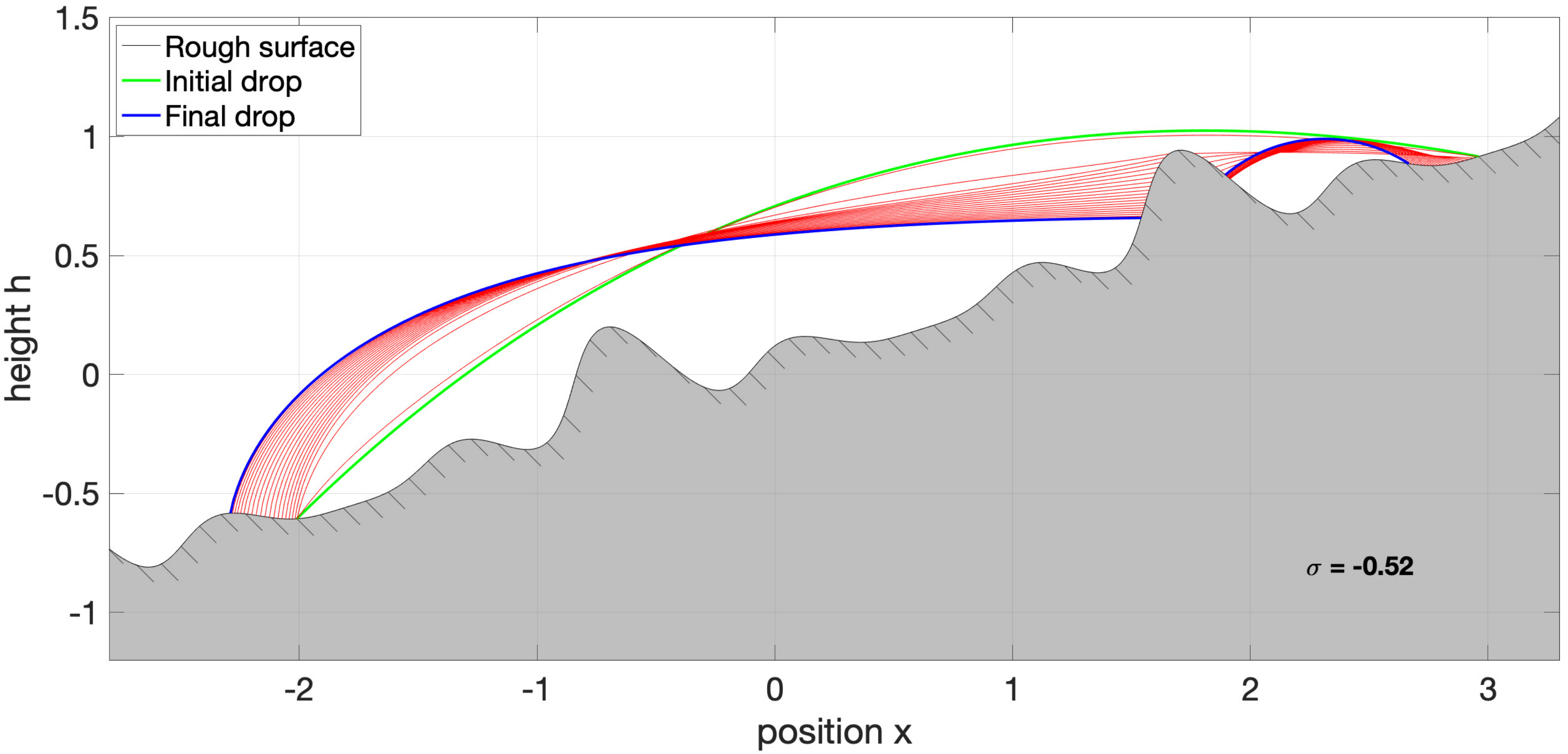} 
\caption{Evolution and splitting of one droplet into two droplets on an inclined groove-textured surface    using the projection scheme in Section \ref{sec_scheme}. Parameters: $\kappa=1, \, \beta=0.1$, number of moving grids  $N=200$, time step $\Delta t = 0.005$, final time $T=1$, splitting threshold $\eps=0.075$,  initial drop profile (green line) $h(x,t)$ in \eqref{h0_r1} with initial endpoints $a(0)=-2.1, b(0)=3.1$, Bond number $\Bo=0.5712.$,  relative adhesion coefficient $\sigma=-0.95$ and the inclined substrate with effective angle $\theta_0=0.3$ and \eqref{w1_r}. The evolution is shown using red lines at equal time intervals and the final profiles of two new droplets are shown in a blue line.  }\label{fig_r1r2}
\end{figure}

\subsubsection{Example 2: Two droplets merged together in the Utah teapot.}

We use the  Utah teapot, which is well-known in computer graphics history, as a typical inclined groove-textured substrate to demonstrate the merging of two droplets. The  Utah teapot can be constructed by several cubic 
B\'ezier curves \cite{bohm1984survey} connecting the following ten points $x_i, y_i$, $i=1, \cdots, 10$ as listed in Table \ref{table2}. 
\begin{table}
\caption{Ten points used in B\'ezier curve fitting of geometry of a Utah teapot}\label{table2}
\begin{tabular}{|c|c|c|c|c|c|c|c|c|c|c|}
\hline 
$i$ & 1 & 2 & 3 & 4 & 5 & 6 & 7 & 8 & 9 & 10 \\ 
\hline 
$x_i$ & -2 & $-\frac43$ & $-\frac23$ & 0 & $\frac23$ & $\frac43$ & 2 & $2.655$ & 2.846 & 4 \\ 
\hline 
$y_i$ & 0.78 & 0 & 0 & 0 & 0 & 0 & 0.78 & $1.142$ & 2.146 & 2.5 \\ 
\hline 
\end{tabular} 
\end{table}
For the bottom of the teapot, we use $(x_i, y_i)$ for $i=1, \cdots, 4$ and  $(x_i, y_i)$ for $i=4, \cdots, 7$. For the mouth of the teapot, we use $(x_i, y_i)$ for $i=7, \cdots, 10$. Assume the inclined groove-textured substrate is expressed by a parametric curve  $(x(\ell), y(\ell))$. Let $\ell(x)$ be the inverse function of $x(\ell)$, then  $w(x)=y(\ell(x))$ in \eqref{wet-eq-r}. 

Now we take the physical parameters as $\kappa=20, \, \beta=1$ and the relative adhesion coefficient as $\sigma=-0.78$. Assume the initial droplet $1$ is 
\begin{equation}\label{h01_tea}
h(x,0)=4.5(x-a(0))(b(0)-x)+w(a(0))+\frac{[w(b(0))-w(a(0))](x-a(0))}{b(0)-a(0)}
\end{equation}
 with initial endpoints $a(0)=1.9, b(0)=2.2$; as shown in Fig \ref{fig_teapot} with a magenta double-dotted line. 
Assume the initial droplet $2$ as 
\begin{equation}\label{h02_tea}
h(x,0)=7.8(x-a(0))(b(0)-x)+w(a(0))+\frac{[w(b(0))-w(a(0))](x-a(0))}{b(0)-a(0)}
\end{equation}
 with initial endpoints $a(0)=2.4, b(0)=2.9$ as shown in Fig \ref{fig_teapot} with a green double-dotted line.  The corresponding effective Bond number can be calculated according to Remark \ref{rem1} with effective inclined angle $\theta_0=0.226\pi$, $\Bo=0.0832$ for Droplet $1$ while $\Bo=0.7861$ for Droplet $2$.
 In the numeric scheme, we use $N=1000$ moving grids uniformly in $(a(t), b(t))$ and the merging threshold $\eps=0.01$. We take the same final time $T=12$ with time step $\Delta t = 0.05$.  
Without merging,
 the dynamics at equal time intervals of Droplet $1$ and Droplet $2$ are shown separately as comparisons in Fig \ref{fig_teapot} (upper/middle) with the final profile at $T=12$ using a solid magenta line for Droplet 1 and a solid green line for Droplet 2.
The small magenta Droplet 1 (upper) shows slow capillary rise, while the large green Droplet 2 (middle) moves down fast due to gravitational effect.
However, with the same parameters and same initial profiles (double-dotted lines), the dynamics at equal time intervals for the two droplets placed together in the Utah teapot are shown in Fig \ref{fig_teapot} (down). The two droplets will merge together at $T=3$ with the solid magenta/green lines for Droplet 1/Droplet 2  and then they continue to  move down as a new big droplet as shown in thin blue lines. The final profile of the new big droplet at $T=12$ is shown in a solid blue line.

 \begin{figure}
\includegraphics[height=6.08cm]{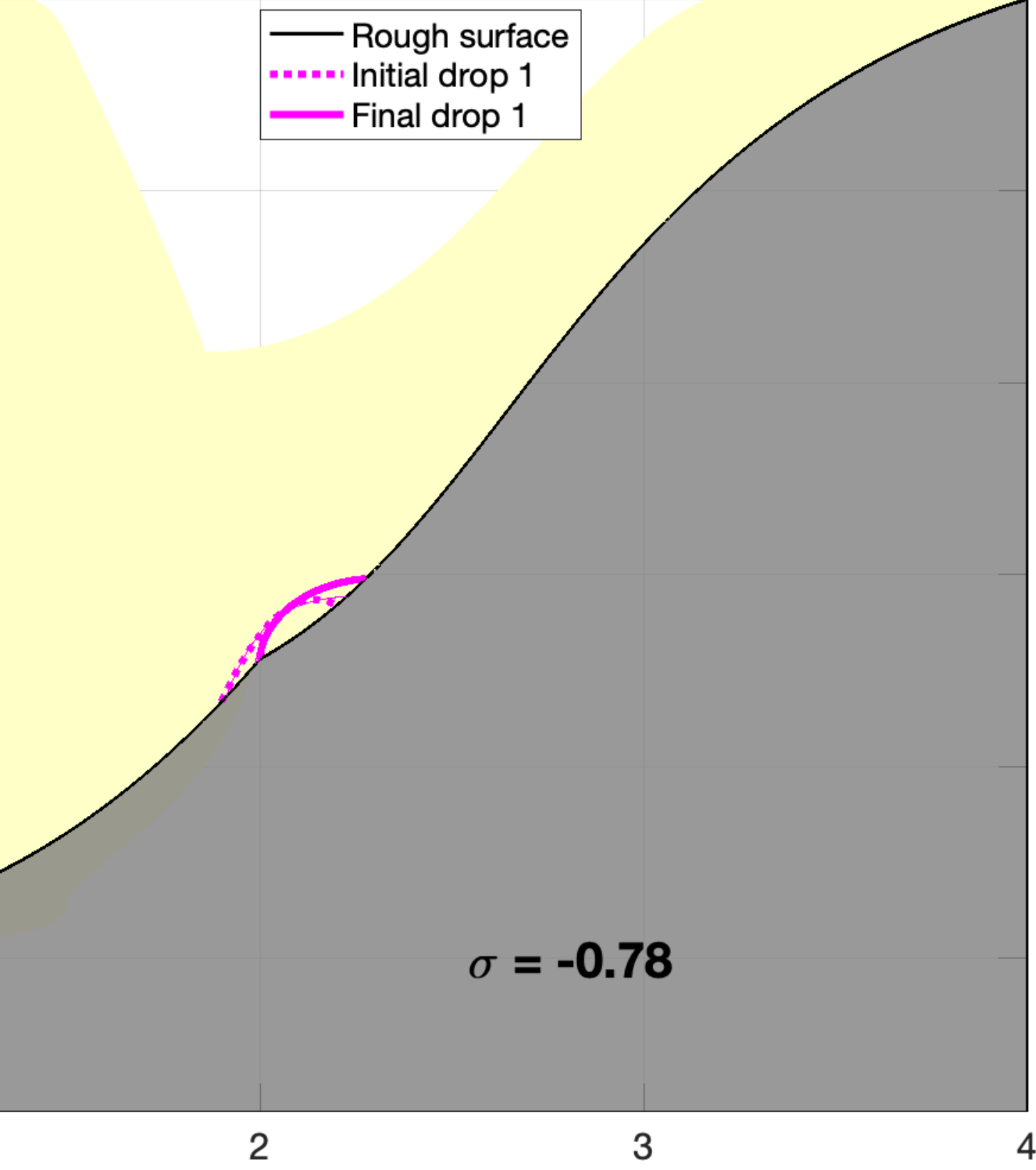} \includegraphics[height=6.08cm]{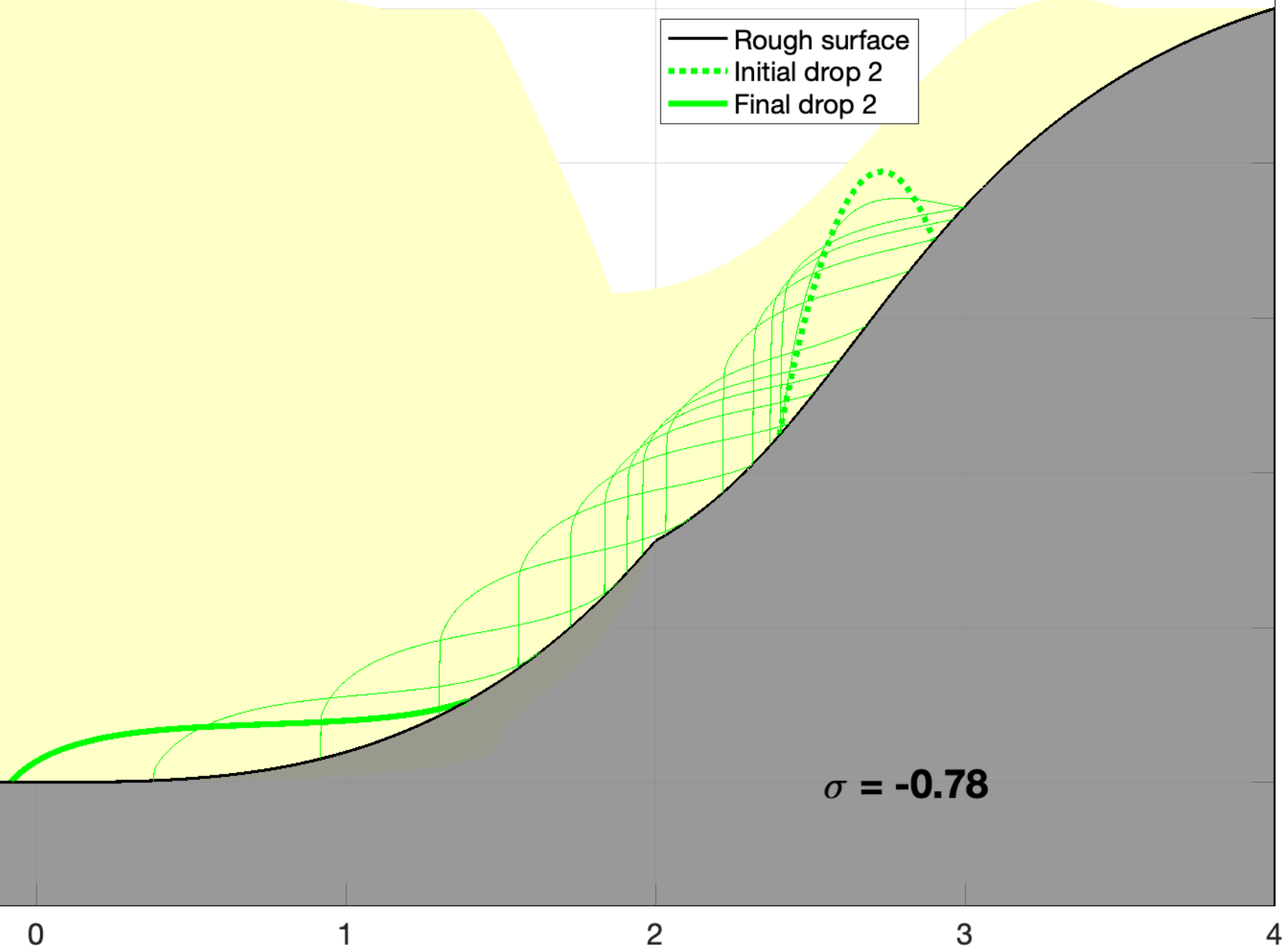} \includegraphics[height=7.0cm]{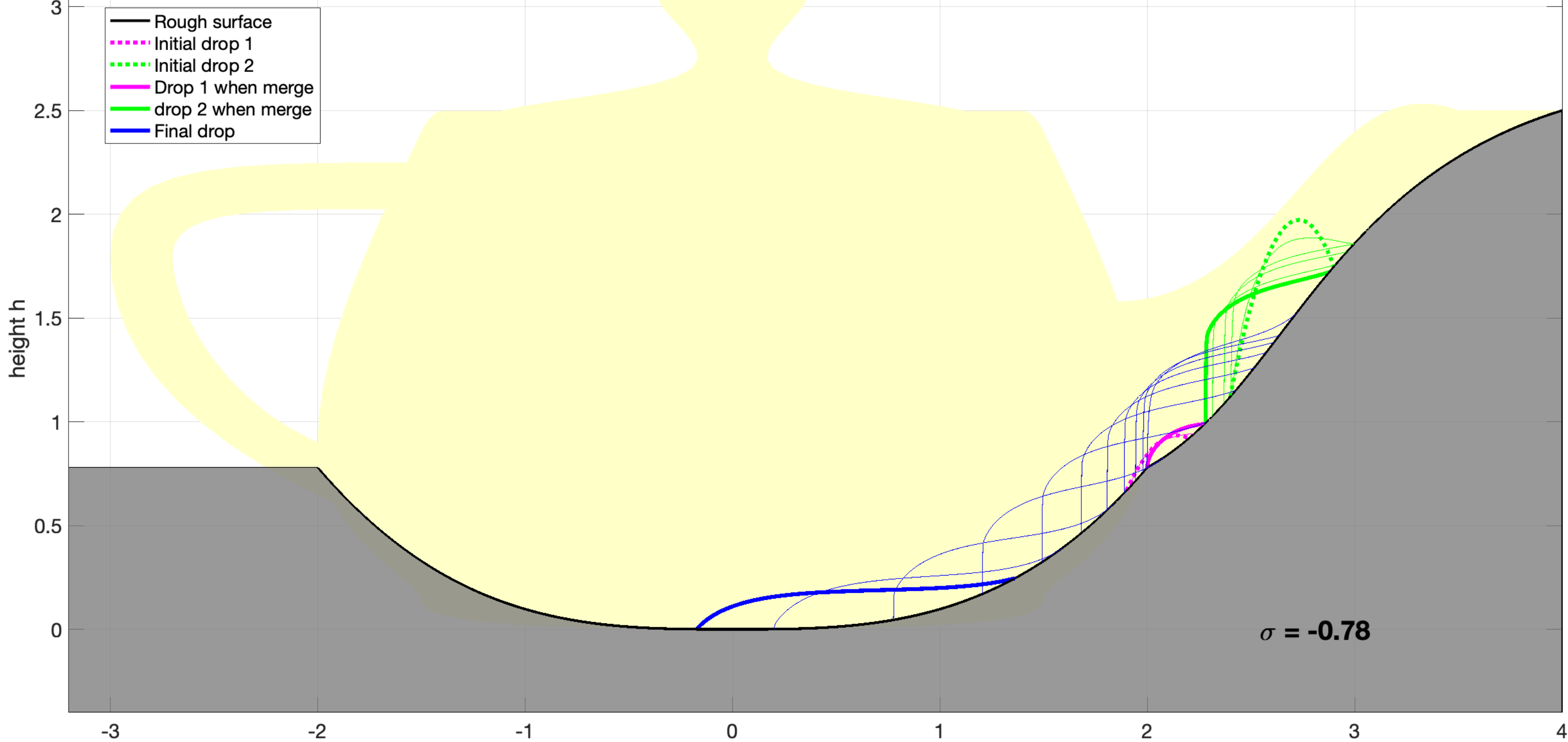} 
 \caption{Evolution of two partially wetting droplets in the Utah teapot at equal time intervals using the scheme in Section \ref{sec_scheme}. Parameters: number of moving grids in drop $N=1000$, time step $\Delta t = 0.05$, final time $T=12$, $\kappa=20, \, \beta=1$, relative adhesion coefficient $\sigma=-0.78$, merging threshold $\eps=0.01$, Bond number $\Bo=0.0832$ for Droplet $1$ and $\Bo=0.7861$ for Droplet $2$, initial Droplet $1$ profile (magenta double-dotted line) given in \eqref{h01_tea} with $a(0)=1.9, b(0)=2.2$ and initial Droplet $2$ profile (green double-dotted line) given in \eqref{h02_tea} with $a(0)=2.4, b(0)=2.9$.   (upper left) Droplet 1 with slow capillary rise; (upper right) Droplet 2 moves down fast due to gravitational effect; (down) Dynamics of two droplets: merge together and then  moves down as a new big droplet with final profile shown in solid blue lines.}\label{fig_teapot}
\end{figure}

\section*{Acknowledge}
The authors would like to thank Prof. Tom Witelski for some helpful suggestions. 
J.-G. Liu was supported in part by the National Science Foundation (NSF) under award DMS-1812573.

\bibliographystyle{abbrv}
\bibliography{dropevi}

\begin{thebibliography}{10}

\bibitem{Almeida_Chambolle_Novaga_2012}
L.~Almeida, A.~Chambolle, and M.~Novaga.
\newblock Mean curvature flow with obstacles.
\newblock {\em Annales de l’Institut Henri Poincar{\'e} Non Linear Analysis},
  29(5):667–681, Sep 2012.

\bibitem{barrett2020parametric}
J.~W. Barrett, H.~Garcke, and R.~N{\"u}rnberg.
\newblock Parametric finite element approximations of curvature-driven
  interface evolutions.
\newblock In {\em Handbook of Numerical Analysis}, volume~21, pages 275--423.
  Elsevier, 2020.

\bibitem{bohm1984survey}
W.~B{\"o}hm, G.~Farin, and J.~Kahmann.
\newblock A survey of curve and surface methods in cagd.
\newblock {\em Computer Aided Geometric Design}, 1(1):1--60, 1984.

\bibitem{brezis1973operateurs}
H.~Brezis.
\newblock Op{\'e}rateurs maximaux monotones: et semi-groupes de contractions
  dans les espaces de hilbert.
\newblock 1973.

\bibitem{caffarelli2006homogenization}
L.~A. Caffarelli, K.-A. Lee, and A.~Mellet.
\newblock Homogenization and flame propagation in periodic excitable media: the
  asymptotic speed of propagation.
\newblock {\em Communications on Pure and Applied Mathematics: A Journal Issued
  by the Courant Institute of Mathematical Sciences}, 59(4):501--525, 2006.

\bibitem{Caffarelli_Mellet_2007}
L.~A. Caffarelli and A.~Mellet.
\newblock Capillary drops: Contact angle hysteresis and sticking drops.
\newblock {\em Calculus of Variations and Partial Differential Equations},
  29(2):141–160, Mar 2007.

\bibitem{Caffarelli_Mellet_2007a}
L.~A. Caffarelli and A.~Mellet.
\newblock {\em Capillary drops on an inhomogeneous surface}, volume 446, page
  175–201.
\newblock American Mathematical Society, 2007.

\bibitem{Chen_Wang_Xu_2013}
X.~Chen, X.-P. Wang, and X.~Xu.
\newblock Effective contact angle for rough boundary.
\newblock {\em Physica D: Nonlinear Phenomena}, 242(1):54–64, Jan 2013.

\bibitem{chorin1969convergence}
A.~J. Chorin.
\newblock On the convergence of discrete approximations to the navier-stokes
  equations.
\newblock {\em Mathematics of computation}, 23(106):341--353, 1969.

\bibitem{deGennes_1985}
P.~G. de~Gennes.
\newblock Wetting: statics and dynamics.
\newblock {\em Reviews of Modern Physics}, 57(3):827–863, Jul 1985.

\bibitem{de2013capillarity}
P.-G. De~Gennes, F.~Brochard-Wyart, and D.~Qu{\'e}r{\'e}.
\newblock {\em Capillarity and wetting phenomena: drops, bubbles, pearls,
  waves}.
\newblock Springer Science \& Business Media, 2013.

\bibitem{dussan1985ability}
E.~Dussan.
\newblock On the ability of drops or bubbles to stick to non-horizontal
  surfaces of solids. part 2. small drops or bubbles having contact angles of
  arbitrary size.
\newblock {\em J. Fluid Mech}, 151(1):20, 1985.

\bibitem{Lu_Otto_2015}
S.~Esedoglu and F.~Otto.
\newblock Threshold dynamics for networks with arbitrary surface tensions.
\newblock {\em Communications on Pure and Applied Mathematics},
  68(5):808–864, May 2015.

\bibitem{Esedoglu_Tsai_Ruuth_2008}
S.~Esedoglu, R.~Tsai, and S.~Ruuth.
\newblock Threshold dynamics for high order geometric motions.
\newblock {\em Interfaces and Free Boundaries}, page 263–282, 2008.

\bibitem{feldman_dynamic_2014}
W.~M. Feldman and I.~C. Kim.
\newblock Dynamic {Stability} of {Equilibrium} {Capillary} {Drops}.
\newblock {\em Arch Rational Mech Anal}, 211(3):819--878, Mar. 2014.

\bibitem{Feldman_Kim_2018}
W.~M. Feldman and I.~C. Kim.
\newblock Liquid drops on a rough surface.
\newblock {\em Communications on Pure and Applied Mathematics},
  71(12):2429–2499, Dec 2018.

\bibitem{gao2020gradient}
Y.~Gao and J.-G. Liu.
\newblock Gradient flow formulation and second order numerical method for
  motion by mean curvature and contact line dynamics on rough surface.
\newblock {\em to appear in interface and free boundary, arXiv:2001.04036},
  2021.

\bibitem{gaoliu21}
Y.~Gao and J.-G. Liu.
\newblock Surfactant-dependent contact line dynamics and droplet adhesion on
  textured substrates: derivations and computations.
\newblock {\em arXiv:2101.07445}, 2021.

\bibitem{glowinski}
R.~Glowinski, J.-L. Lions, and R.~Tr{\'e}moli{\`e}res.
\newblock {\em Numerical analysis of variational inequalities}.
\newblock Studies in mathematics and its applications. North-Holland Pub. Co.;
  Sole distributors for the U.S.A. and Canada, 1981.

\bibitem{goldstein2002classical}
H.~Goldstein, C.~Poole, and J.~Safko.
\newblock Classical mechanics. 3rd, 2002.

\bibitem{Grunewald_Kim_2011}
N.~Grunewald and I.~Kim.
\newblock A variational approach to a quasi-static droplet model.
\newblock {\em Calculus of Variations and Partial Differential Equations},
  41(1–2):1–19, May 2011.

\bibitem{Jiang_Bao_Thompson_Srolovitz_2012}
W.~Jiang, W.~Bao, C.~V. Thompson, and D.~J. Srolovitz.
\newblock Phase field approach for simulating solid-state dewetting problems.
\newblock {\em Acta Materialia}, 60(15):5578–5592, Sep 2012.

\bibitem{Kato_Masuda_1978}
T.~Kato and K.~Masuda.
\newblock Trotter’s product formula for nonlinear semigroups generated by the
  subdifferentials of convex functionals.
\newblock {\em Journal of the Mathematical Society of Japan}, 30(1):169–178,
  Jan 1978.

\bibitem{Kim_Mellet_2014}
I.~Kim and A.~Mellet.
\newblock Liquid drops sliding down an inclined plane.
\newblock {\em Transactions of the American Mathematical Society},
  366(11):6119–6150, May 2014.

\bibitem{Kind00}
D.~Kinderlehrer and G.~Stampacchia.
\newblock {\em An Introduction to Variational Inequalities and Their
  Applications}.
\newblock Society for Industrial and Applied Mathematics, 2000.

\bibitem{Klingenberg_Wilhelm}
W.~Klingenberg and W.~P. Klingenberg.
\newblock {\em Riemannian Geometry}.
\newblock De Gruyter, 2011.

\bibitem{komura1967nonlinear}
Y.~Komura.
\newblock Nonlinear semi-groups in hilbert space.
\newblock {\em Journal of the Mathematical Society of Japan}, 19(4):493--507,
  1967.

\bibitem{Leung_Zhao_2009}
S.~Leung and H.~Zhao.
\newblock A grid based particle method for moving interface problems.
\newblock {\em Journal of Computational Physics}, 228(8):2993–3024, May 2009.

\bibitem{li2010augmented}
Z.~Li, M.-C. Lai, G.~He, and H.~Zhao.
\newblock An augmented method for free boundary problems with moving contact
  lines.
\newblock {\em Computers \& fluids}, 39(6):1033--1040, 2010.

\bibitem{mercier2015mean}
G.~Mercier and M.~Novaga.
\newblock Mean curvature flow with obstacles: existence, uniqueness and
  regularity of solutions.
\newblock {\em Interfaces and Free Boundaries}, 17(3):399--427, 2015.

\bibitem{Morland_1982}
L.~W. Morland.
\newblock A fixed domain method for diffusion with a moving boundary.
\newblock {\em Journal of Engineering Mathematics}, 16(3):259–269, Sep 1982.

\bibitem{ren2007boundary}
W.~Ren and W.~E.
\newblock Boundary conditions for the moving contact line problem.
\newblock {\em Physics of fluids}, 19(2):022101, 2007.

\bibitem{schaffer1998dynamics}
E.~Sch{\"a}ffer and P.-z. Wong.
\newblock Dynamics of contact line pinning in capillary rise and fall.
\newblock {\em Physical review letters}, 80(14):3069, 1998.

\bibitem{Scholz_1984}
R.~Scholz.
\newblock Numerical solution of the obstacle problem by the penalty method.
\newblock {\em Computing}, 32(4):297–306, Dec 1984.

\bibitem{temam1970quelques}
R.~Temam.
\newblock Quelques m{\'e}thodes de d{\'e}composition en analyse num{\'e}rique.
\newblock In {\em Actes Congr{\'e}s Intern. Math}, pages 311--319, 1970.

\bibitem{Tran_Schaeffer_Feldman_Osher_2015}
G.~Tran, H.~Schaeffer, W.~M. Feldman, and S.~J. Osher.
\newblock An $l^1$ penalty method for general obstacle problems.
\newblock {\em SIAM Journal on Applied Mathematics}, 75(4):1424–1444, Jan
  2015.

\bibitem{Wang_Wang_Xu_2019}
D.~Wang, X.-P. Wang, and X.~Xu.
\newblock An improved threshold dynamics method for wetting dynamics.
\newblock {\em Journal of Computational Physics}, 392:291–310, Sep 2019.

\bibitem{Wang_Jiang_Bao_Srolovitz_2015}
Y.~Wang, W.~Jiang, W.~Bao, and D.~J. Srolovitz.
\newblock Sharp interface model for solid-state dewetting problems with weakly
  anisotropic surface energies.
\newblock {\em Physical Review B}, 91(4):045303, Jan 2015.
\newblock arXiv: 1407.8331.

\bibitem{weinan2001numerical}
E.~Weinan and X.-P. Wang.
\newblock Numerical methods for the landau-lifshitz equation.
\newblock {\em SIAM journal on numerical analysis}, pages 1647--1665, 2001.

\bibitem{xu2016reinitialization}
S.~Xu and W.~Ren.
\newblock Reinitialization of the level-set function in 3d simulation of moving
  contact lines.
\newblock {\em Communications in Computational Physics}, 20(5):1163--1182,
  2016.

\bibitem{xu2017efficient}
X.~Xu, D.~Wang, and X.-P. Wang.
\newblock An efficient threshold dynamics method for wetting on rough surfaces.
\newblock {\em Journal of Computational Physics}, 330:510--528, 2017.

\bibitem{young1805iii}
T.~Young.
\newblock Iii. an essay on the cohesion of fluids.
\newblock {\em Philosophical transactions of the royal society of London},
  (95):65--87, 1805.

\bibitem{Zhao_Chan_Merriman_Osher_1996}
H.-K. Zhao, T.~Chan, B.~Merriman, and S.~Osher.
\newblock A variational level set approach to multiphase motion.
\newblock {\em Journal of Computational Physics}, 127(1):179–195, Aug 1996.

\bibitem{Zosso_Osting_Xia_Osher_2017}
D.~Zosso, B.~Osting, M.~Xia, and S.~J. Osher.
\newblock An efficient primal-dual method for the obstacle problem.
\newblock {\em Journal of Scientific Computing}, 73(1):416–437, Oct 2017.

\end{thebibliography}

\end{document}